\newcommand{\R}{\mathbb R}
\newcommand{\N}{{\mathbb N}}
\newcommand{\Z}{\mathbb Z}
\numberwithin{equation}{section}
\newtheorem{theorem}{Theorem}[section]
\newtheorem{proposition}[theorem]{Proposition}
\newtheorem{remark}[theorem]{Remark}
\newtheorem{lemma}[theorem]{Lemma}
\newtheorem{definition}[theorem]{Definition}
\begin{document}
\vglue-1cm \hskip1cm
\title[The Benjamin-Ono-Zakharov-Kuznetsov equation]{The IVP for the Benjamin-Ono-Zakharov-Kuznetsov equation in low regularity Sobolev spaces}



\author[A. Cunha]{Alysson Cunha}
\address{IME-Universidade Federal de Goi\'as (UFG), 131, 74001-970, Goi\^an\-ia-GO, Bra\-zil}
\email{alysson@ufg.br}


\author[A. Pastor]{Ademir Pastor}
\address{IMECC-UNICAMP, Rua S\'ergio Buarque de Holanda, 651, 13083-859, Cam\-pi\-nas-SP, Bra\-zil.}
\email{apastor@ime.unicamp.br}



\keywords{BO-ZK equation, Cauchy problem, Local well-posedness}

\begin{abstract}
In this paper we study  the initial-value problem associated with the
Benjamin-Ono-Zakharov-Kuznetsov equation.  Such equation appears as a two-dimensional generalization of the Benjamin-Ono equation when transverse effects are included via weak dispersion of Zakharov-Kuznetsov type.  We prove that the initial-value problem is locally well-posed in the usual $L^2(\R^2)$-based Sobolev spaces $H^{s}(\R^2)$, $s>11/8$, and in some weighted Sobolev spaces.  To obtain our results, most of the arguments are accomplished
taking into account the ones for the Benjamin-Ono equation.
\end{abstract}

\maketitle

\section{Introduction}\label{introduction}
The Benjamin-Ono (BO) equation
\begin{equation}\label{boequation}
u_{t}+\mathcal{H}\partial_{x}^{2}u+uu_{x}=0, \quad u=u(t,x), \;\;x\in\R, \;t>0,
\end{equation}
was proposed as a model for unidirectional long internal gravity  waves in deep stratified
fluids (see \cite{be} and \cite{ono}). However, when the effects of long wave
lateral dispersion are included, two-dimensional generalizations of \eqref{boequation} appear.

In the present work, we study a generalization of \eqref{boequation} when the transverse effects are included via weak dispersion of Zakharov-Kuznetsov-type: the so-called Benjamin-Ono-Zakharov-Kuznetsov (BO-ZK) equation. Such equation, coupled with an initial condition $\phi$, reads as
\begin{equation}\label{bozk}
\begin{cases}
u_{t}+\mathcal{H}\partial_{x}^{2}u+u_{xyy}+uu_{x}=0, \;\;(x,y)\in\R^2, \;t>0, \\
u(0,x,y)=\phi(x,y),
\end{cases}
\end{equation}
where $u=u(t,x,y)$ is a real-valued function and  $\mathcal{H}$, as in \eqref{boequation}, stands for the
Hilbert transform in the $x$ direction defined as
$$
\mathcal{H}u(t,x,y)=\mathrm{p.v.}\frac{1}{\pi}\int_{\R}\frac{u(t,z,y)}{x-z}dz.
$$
Recall that $\mathrm{p.v.}$ denotes the Cauchy principal value. The BO-ZK equation was  introduced in \cite{Jorge} and
\cite{Latorre} and it has applications to electromigration in thin
nanoconductors on a dielectric substrate.

Our aim here is to study the local well-posedness of the initial-value problem (IVP) \eqref{bozk}. Throughout the paper,   well-posedness is understood  in Kato's sense, that
is, it includes existence, uniqueness, persistency property, and  continuous
dependence of the data-solution map. Roughly speaking, this means if $\phi$ belongs to a function space $X$ then the solution $u(t)$, as long as it exists,  describes a continuous curve in $X$.

From the mathematical point of view, the BO-ZK equation has gained some attention in recent years. Indeed, let us recall some previous results.
 In \cite{EP} and \cite{EP1}, the authors
studied existence and stability of solitary waves solutions having the form
$u(t,x,y)=\varphi_c(x-ct,y)$, where $c$ is a real parameter and $\varphi_c$
is smooth and decays to zero at infinity. By using the variational approach
introduced by Cazenave and Lions \cite{CL}, they proved, in particular, the
orbital stability of ground state solutions in the energy space. Also, an interesting feature of the traveling waves associated with the BO-ZK equation is that they have an algebraic decay in the direction of propagation and an exponential decay in the transverse direction. In fact this is expected if one recalls that solitary waves of BO equation has an algebraic decay while the solitary waves of the Zakharov-Kuznetsov equation has an exponential decay.

Due to the anisotropic structure of the linear part of the BO-ZK equation, in order to obtain the existence of solitary waves through a minimization problem, in \cite{EP}, the authors established an anisotropic Gagliardo-Nirenberg type inequality. The optimal constant appearing in such an inequality was characterized in \cite{EP4}, in terms of the ground state solutions of \eqref{bozk}. As a result, the authors, in \cite{EP4} also established the uniform bound of smooth solutions in the energy space.

 Unique continuation properties 
were addressed in \cite{CunhaPastor} and \cite{EP3}.  In \cite{EP3}, the authors showed if a sufficiently smooth solution is supported in a rectangle (for all time), then it must vanish identically. This result was improved in \cite{CunhaPastor}, where the authors showed that if a sufficiently smooth local solution has, in three different times, a suitable algebraic decay at infinity, then it must be identically zero.

The IVP \eqref{bozk} was essentially studied in \cite{CunhaPastor} and  \cite{EP2}. As for the BO equation, the balance between the nonlinearity and smoothing properties of the linear part prevent us in using a fixed-point argument to solve \eqref{bozk}. Indeed, following  the ideas of \cite{MST}, the
authors in \cite{EP2} established the ill-posedness of \eqref{bozk}  in the
sense that it cannot be solved in the usual  $L^2$-based Sobolev
space by using a fixed point argument. More precisely, for any $s\in\R$, the
map data-solution cannot be $C^2$-differentiable at the origin from
$H^s(\R^2)$ to $H^s(\R^2)$. It is then seen that \eqref{bozk} is not ``dispersive enough'' to recover the lost of derivative in the nonlinear term. This lead the authors in \cite{CunhaPastor} to study \eqref{bozk} by using parabolic regularization and truncation arguments. In particular, the following results were proved (see Section \ref{Notation} for the definition of the spaces $\mathcal{Z}_{s,r}$ and $\dot{\mathcal{Z}}_{s,r}$).
\vskip.2cm

\noindent {\bf Theorem A.} {\em
Let $s>2$. Then for any $\phi\in H^{s}(\R^2),$ there exist a positive
$T=T(\|\phi\|_{H^{s}})$  and a unique solution $u\in C([0,T];H^{s}(\R^2))$ of
the $\mathrm{IVP}$ \eqref{bozk}. Furthermore, the flow-map $\phi\mapsto u(t)$
is continuous in the $H^{s}$-norm and
$$
\|u(t)\|_{H^s}\leq \rho(t), \qquad t\in [0,T],
$$
where $\rho$ is a function  in $C([0,T];\R)$.
}

\vskip.2cm
\noindent {\bf Theorem B.} {\em
The following statements  hold.
\begin{itemize}
\item [(i)] If $s>2$ and $r\in [0,1]$ then  \eqref{bozk} is
locally well-posed in $\mathcal{Z}_{s,r}$. Furthermore, if $r\in (1,5/2)$ and
$s\geq 2r$ then  \eqref{bozk} is locally well-posed in $\mathcal{Z}_{s,r}.$
\item [(ii)] If $r\in [5/2,7/2)$ and $s\geq 2r$, then
\eqref{bozk} is locally well-posed in $\dot{\mathcal{Z}}_{s,r}.$
\end{itemize}
}
\vskip.2cm

The idea to prove Theorem A was to use the standard parabolic regularization method. As a consequence, the dispersive structure of the equation was not take into account. On the other hand, to prove Theorem B, the authors employed a truncation-type argument introduced quite recently in \cite{GermanPonce} to study the IVP associated with the BO equation \eqref{boequation} in weighted Sobolev spaces. This technique has been shown to be a powerful tool in order to study the IVP associated with nonlinear dispersive equations in weighted Soboev spaces (see e.g.,  \cite{bjm}, \cite{bum} \cite{FLP}, \cite{FLP1}, \cite{fbs}, \cite{fopa}, \cite{jose}, and references therein).

Our main  goal in this paper is to improve Theorems A and B by pushing down the Sobolev regularity index. Our main results read as follows.

\begin{theorem}\label{baixaregularidade}
 Let $s>11/8.$ Then for all $\phi \in H^{s}(\R^2),$ there exists $T\geq c\|\phi\|_{H^s}^{-8}$ and a unique solution of \eqref{bozk} defined in $[0,T]$ such that
$$u\in C([0,T];H^{s}(\R^2)) \quad \mbox{and} \quad \ u_{x}\in L^{1}([0,T]; L^{\infty}(\R^2)).$$
Moreover, for all $R>0,$ there exists $T\geq c R^{-8}$ such that the map
$$\phi \in B(0,R) \mapsto u \in C([0,T];H^{s}(\R^2))$$
is continuous, where $B(0,R)$ denotes the ball of radius $R$ centered at the origin of $H^{s}(\R^2).$
\end{theorem}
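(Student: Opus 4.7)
Because of the ill-posedness result of \cite{EP2} quoted in the introduction, a direct contraction mapping argument on Duhamel's formula is forbidden. The plan is instead to follow the strategy developed by Koch--Tzvetkov and Kenig--Koenig for the one-dimensional BO equation: combine energy estimates in $H^s$ with refined Strichartz-type estimates for the linear group, and use a parabolic regularization together with compactness to produce the solution. Throughout, $U(t) = e^{-t(\mathcal{H}\partial_x^2+\partial_x\partial_y^2)}$ denotes the unitary group, whose symbol is $\omega(\xi,\eta) = -\xi(|\xi|+\eta^2)$.

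The first (and key) step is to establish linear dispersive estimates adapted to the anisotropic BO--ZK symbol. Using a Littlewood--Paley decomposition in the joint frequency $(\xi,\eta)$ and stationary-phase analysis, I would prove a Strichartz-type inequality of the form
\begin{equation*}
\|U(t)\phi\|_{L^q_T L^\infty_{xy}} \lesssim T^{\theta}\|\phi\|_{H^{s_0}},
\end{equation*}
with indices $q,\theta,s_0$ chosen so that, after applying the estimate to $\partial_x u$ via Duhamel's formula, one recovers a bound
\begin{equation*}
\|u_x\|_{L^1_T L^\infty_{xy}} \lesssim T^{\alpha}\bigl(\|u\|_{L^\infty_T H^s} + \|u\|_{L^\infty_T H^s}^2\bigr), \qquad s>11/8,
\end{equation*}
for the nonlinear solution. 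The index $11/8 = 5/4 + 1/8$ should arise from combining the one-dimensional BO Strichartz (which yields the $5/4$ of Koch--Tzvetkov) with the extra smoothing in the transverse direction coming from the $\partial_x\partial_y^2$ term; the degeneracy of $\omega$ at $\xi=0$ will have to be handled carefully by a separate low-frequency argument.

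Second, following \cite{CunhaPastor}, I would approximate $\phi$ by smooth data $\phi^\epsilon \in H^\infty$ and use Theorem A applied to the parabolically regularized problem
\begin{equation*}
u^\epsilon_t + \mathcal{H}\partial_x^2 u^\epsilon + u^\epsilon_{xyy} + u^\epsilon u^\epsilon_x = \epsilon\Delta u^\epsilon, \qquad u^\epsilon(0)=\phi^\epsilon,
\end{equation*}
to obtain smooth approximate solutions. Applying $J^s = (1-\Delta)^{s/2}$, pairing with $J^s u^\epsilon$, using the antisymmetry of $\mathcal{H}\partial_x^2$ and $\partial_x\partial_y^2$, and estimating the commutator $[J^s,u^\epsilon]\partial_x u^\epsilon$ by Kato--Ponce, yields the familiar energy inequality
\begin{equation*}
\frac{d}{dt}\|u^\epsilon\|_{H^s}^2 \lesssim \|u^\epsilon_x\|_{L^\infty_{xy}}\|u^\epsilon\|_{H^s}^2,
\end{equation*}
with the dissipation term having the right sign. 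Integrating and inserting the Strichartz bound above gives a bootstrap inequality of the schematic form $\Phi(T)^2 \leq \|\phi\|_{H^s}^2 + C\,T^{\alpha}\Phi(T)^{\beta}$, which closes on an interval of size $T \gtrsim \|\phi\|_{H^s}^{-8}$ once the exponents $\alpha,\beta$ are tracked (uniformly in $\epsilon$).

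Finally, compactness gives a weak limit $u \in L^\infty_T H^s$ solving \eqref{bozk}; the Strichartz control $u_x \in L^1_T L^\infty_{xy}$ passes to the limit and supplies uniqueness via a Gronwall argument for the $L^2$-difference of two solutions. Continuity of the flow on balls, and the $C([0,T];H^s)$-regularity, are then obtained by a Bona--Smith type argument: approximate $\phi$ by $\phi_N$ of higher regularity, use the $\|\cdot\|_{H^s}^{-8}$ lower bound on the lifespan to keep the approximating solutions on a common interval, and estimate the difference in $L^2$ and then interpolate with the uniform $H^{s+}$ bound. The main obstacle is the first step: quantifying the dispersion of $U(t)$ sharply enough to beat $s=11/8$, given the non-principal-type degeneracy of $\omega(\xi,\eta)$ at $\xi=0$ and the fact that $x$ and $y$ enter with different scalings.
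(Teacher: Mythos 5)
Your overall philosophy --- Strichartz estimates combined with energy estimates, no gauge transform, approximation by smooth solutions, Gronwall for uniqueness --- is indeed the paper's (Koch--Tzvetkov) strategy, but the central mechanism you propose would fail. You want to apply a linear-group estimate $\|U(t)\phi\|_{L^q_TL^\infty_{xy}}\lesssim T^\theta\|\phi\|_{H^{s_0}}$ to $\partial_x u$ through Duhamel's formula, with $uu_x$ entering as a source. That loses a full derivative: the retarded term forces you to control $\|\partial_x(uu_x)\|_{H^{s_0}}=\|uu_x\|_{H^{s_0+1}}$, and product estimates require roughly $s\geq s_0+2$ to bound this by $\|u\|_{H^s}^2$; so no choice of $(q,\theta,s_0)$ produces your claimed bound $\|u_x\|_{L^1_TL^\infty_{xy}}\lesssim T^\alpha(\|u\|_{L^\infty_TH^s}+\|u\|_{L^\infty_TH^s}^2)$ for $s>11/8$ --- you land back at $s>2$, i.e., Theorem A. This derivative loss is precisely the obstruction behind the ill-posedness result you quote. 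The cure, which is the heart of the paper, is a refined use of Strichartz: Littlewood--Paley localize the \emph{solution}, write the equation for $u_\lambda=\Delta_\lambda u$ as the linear equation $\partial_t u_\lambda+\mathcal{H}\partial_x^2u_\lambda+\partial_x\partial_y^2u_\lambda+u\partial_xu_\lambda=-[\Delta_\lambda,u\partial_x]u$, i.e., keep the transport term with variable coefficient $V=u$ inside the linear operator and put only the commutator (which costs no derivative, Lemma \ref{commutator}) in the source; then prove the Strichartz bound on time intervals of length $|I|\leq c\lambda^{-1}$, where the frequency-localized loss $\|\partial_x u_\lambda\|\leq 2\lambda\|u_\lambda\|$ is exactly compensated by $|I|$ (Lemma \ref{seinfeld}), and sum over the $\sim T\lambda$ intervals. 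The net loss is then only $\lambda^{1/p}$, and since admissibility forces $p>8/3$, i.e., $1/p<3/8$, this is exactly where the threshold $11/8=1+3/8$ comes from: one needs $\sigma>1$ for the embeddings and commutators, and $\sigma+1/p\leq s$ (Lemmas \ref{lemaadmissivel}, \ref{lemap}, Proposition \ref{principal}, Lemma \ref{F}). Your guess $11/8=5/4+1/8$ is not how the index arises.

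Two further discrepancies, less serious. First, the lifespan $T\geq c\|\phi\|_{H^s}^{-8}$ is not obtained by tracking bootstrap exponents: the bootstrap (Lemma \ref{F}, closed by a continuity/implicit-function-theorem argument in Lemma \ref{lemmafuncd}) only yields existence on $[0,1]$ for \emph{small} data, and the lifespan then follows from the anisotropic scaling $u(t,x,y)=\lambda^{-1}\tilde u(\lambda^{-2}t,\lambda^{-1}x,\lambda^{-1/2}y)$, under which $\|\tilde u_0\|_{H^s}\leq\lambda^{1/4}\|\phi\|_{H^s}$, so that $T\sim\lambda^2\gtrsim\|\phi\|_{H^s}^{-8}$ (Lemma \ref{dadopequeno}). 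Second, the paper neither regularizes the equation with $\epsilon\Delta$ (it mollifies the data and invokes Theorem A for smooth solutions, extended to a common interval by an ODE comparison), nor proves continuous dependence by Bona--Smith: instead it uses the Koch--Tzvetkov device of dyadic weights $\omega_\lambda$ with $\omega_\lambda/\lambda^s\to\infty$ (Lemmas \ref{lema6} and \ref{lemaomega}), which gives uniform control of the high-frequency tails of the approximating solutions in $H^s$. A Bona--Smith argument might be workable, but at this regularity it would require uniform $H^{s'}$ bounds, $s'>s$, for solutions with mollified data, which you would still have to extract from the same frequency-localized machinery you have not set up.
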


\begin{theorem}\label{melhoradoB1}
The following statements are true.
\begin{itemize}
\item [(i)] If $s>11/8$ and $r\in [0,11/16]$ then the $\mathrm{IVP}$ \eqref{bozk} is
locally well-posed  in $\mathcal{Z}_{s,r}$.
\item [(ii)] If $r\in (11/16,1]$ and $s\geq 2r$, then the $\mathrm{IVP}$ \eqref{bozk} is
locally well-posed in $\mathcal{Z}_{s,r}.$
\end{itemize}
\end{theorem}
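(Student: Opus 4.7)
The plan is to build the weighted solutions on top of the $H^s$-solutions provided by Theorem \ref{baixaregularidade}. Given $\phi\in\mathcal{Z}_{s,r}$ (so in particular $\phi\in H^s$ with $s>11/8$ in either case), that theorem already yields a unique $u\in C([0,T];H^s(\R^2))$ with $u_x\in L^1([0,T];L^\infty(\R^2))$, so the remaining task is to propagate the weight: one must show that $\langle\x\rangle^r u(t)\in L^2(\R^2)$ for all $t\in[0,T]$ and that the solution map is continuous into $\mathcal{Z}_{s,r}$. Since $\langle\x\rangle^r$ is equivalent to $\langle x\rangle^r+\langle y\rangle^r$ and $\langle y\rangle^r$ commutes with both $\mathcal H$ and $\partial_x^2$, the genuinely new ingredient is to control $\langle x\rangle^r$ against the BO-type dispersion.

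The core is an a priori weighted energy estimate obtained by pairing the equation with $\langle\x\rangle^{2r}u$ in $L^2$. The delicate terms are: the Hilbert dispersion $\int\langle\x\rangle^{2r}u\,\mathcal H\partial_x^2 u\,dxdy$, controlled via a Fonseca--Linares--Ponce-type bound $\|\langle x\rangle^r\mathcal H\partial_x^2 f\|_{L^2}\lesssim \|f\|_{H^{2r}}+\|\langle x\rangle^r f\|_{L^2}$ (cf.\ \cite{GermanPonce,FLP}); the transverse dispersion $\int\langle\x\rangle^{2r}u\,u_{xyy}\,dxdy$, which after integration by parts produces only $\|\langle\x\rangle^r u\|_{L^2}^2$ plus $H^s$-remainders; and the nonlinear term $\int\langle\x\rangle^{2r}u^2u_x\,dxdy$, bounded by $\|u_x\|_{L^\infty}\|\langle\x\rangle^r u\|_{L^2}^2$ and thus absorbed via the $L^1_T L^\infty$ control on $u_x$ from Theorem \ref{baixaregularidade}. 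A Gronwall step then closes the estimate. The auxiliary quantity $\|u\|_{H^{2r}}$ produced by the Hilbert dispersion is controlled by $\|u\|_{H^s}$ precisely when $s\ge 2r$: in case (i) this is automatic since $2r\le 11/8<s$, while in case (ii) it is exactly the imposed hypothesis.

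To convert the formal computation into a rigorous argument, I would follow the truncation-regularization scheme introduced in \cite{GermanPonce} and already adapted to BO-ZK in \cite{CunhaPastor}: replace $\langle\x\rangle^{r}$ by a bounded weight $w_N(\x)=\langle\x\rangle^{r}\chi(\x/N)$ with $\chi$ a compactly supported cutoff, establish the energy estimate with constants independent of $N$, and pass to the limit $N\to\infty$ via Fatou's lemma. Continuous dependence in $\mathcal{Z}_{s,r}$ would then follow from a Bona--Smith-type argument: approximate $\phi$ by smooth data in $\mathcal{Z}_{s,r}$ and combine the $H^s$-continuity from Theorem \ref{baixaregularidade} with the weighted energy estimate applied to differences of solutions. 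The main technical obstacle is the careful handling of the commutator $[\langle x\rangle^r,\mathcal H\partial_x^2]$, in particular the verification that it maps $H^{2r}$ into weighted $L^2$; the threshold $r=11/16=(11/8)/2$ appears precisely as the borderline at which this commutator is controlled using only the minimal Sobolev regularity $s>11/8$ afforded by Theorem \ref{baixaregularidade}, and this dichotomy is what accounts for the transition between parts (i) and (ii).
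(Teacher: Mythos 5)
Your scaffolding (build on Theorem \ref{baixaregularidade}, truncate the weight, prove an $N$-uniform energy estimate, absorb the nonlinearity via $\|u_x\|_{L^1_TL^\infty}$, Gronwall, then Fatou/monotone convergence and a smoothing argument for continuity) matches the paper. But the technical core is misplaced in two ways, and one of your stated inequalities is false. First, the bound you propose for the Hilbert dispersion,
\begin{equation*}
\|\langle x\rangle^{r}\mathcal{H}\partial_x^2 f\|_{L^2}\lesssim \|f\|_{H^{2r}}+\|\langle x\rangle^{r}f\|_{L^2},
\end{equation*}
cannot hold for $r<1$: testing with $f_\epsilon(x)=f(x/\epsilon)$, the left side scales like $\epsilon^{-3/2}$ while the right side scales like $\epsilon^{1/2-2r}$, so the inequality fails as $\epsilon\to 0$ precisely in the range $r\le 11/16$ under discussion ($\mathcal{H}\partial_x^2$ is genuinely second order; a weight does not reduce its order). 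The paper never needs such a bound: it writes $w_N^{\theta}\mathcal{H}\partial_x^2 v=[w_N^{\theta};\mathcal{H}]\partial_x^2 v+\mathcal{H}\partial_x^2(w_N^{\theta}v)-2\mathcal{H}(\partial_x w_N^{\theta}\,\partial_x v)-\mathcal{H}(\partial_x^2 w_N^{\theta}\,v)$, observes that the second term contributes nothing to the pairing with $w_N^{\theta}v$ by antisymmetry, and bounds the commutator by the Dawson--McGahagan--Ponce estimate (Lemma \ref{Comu}) using only $\|\partial_x^2 w_N^{\theta}\|_{L^\infty}\lesssim 1$ (valid since $\theta\le 1$) and $\|v\|_{H^1}$. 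So the Hilbert term costs $cM_1$ and produces no $H^{2r}$ norm whatsoever; exploiting this cancellation is essential, and your proposal as written has no substitute for it.

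Second, your claim that the transverse term $\int \langle\x\rangle^{2r}u\,u_{xyy}$ yields ``only $\|\langle\x\rangle^{r}u\|_{L^2}^2$ plus $H^s$-remainders'' after integration by parts is where the actual difficulty hides. Integration by parts leaves weighted gradient terms such as $\|w_N^{\theta-1/2}\partial_x v\|$ and $\|w_N^{\theta-1/2}\partial_y v\|$ (see \eqref{teoZc}); for $\theta\le 1/2$ these are trivially bounded by unweighted $H^1$ norms, but for $\theta\in(1/2,11/16]$ they are not controlled by $\|w_N^{\theta}v\|$ and $\|v\|_{H^s}$ without the weighted interpolation inequality of Fonseca--Ponce/Nahas--Ponce type (Lemma \ref{inter}), applied with $a=2\theta$, $b=\theta$, $\alpha=1/(2\theta)$ to give $\|J^1(w_N^{\theta-1/2}v)\|\lesssim \|w_N^{\theta}v\|+\|J^{2\theta}v\|+M_1$. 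It is this step --- not the Hilbert commutator --- that generates the requirement $2\theta\le s$ and hence the threshold $r\le 11/16=(11/8)/2$ in part (i) and the hypothesis $s\ge 2r$ in part (ii). Your proposal never invokes an interpolation inequality, so even granting your (incorrect) treatment of the Hilbert term, the energy estimate would not close for $r>1/2$. Note also that splitting $\langle\x\rangle^{r}\sim\langle x\rangle^{r}+\langle y\rangle^{r}$ does not help: $\langle y\rangle^{r}$ commutes with $\mathcal{H}\partial_x^2$ but not with $\partial_x\partial_y^2$, so the same weighted-gradient obstruction reappears in the transverse term.
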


The plan to prove Theorem \ref{baixaregularidade} is to extend the technique introduce by Koch and Tzvetkov \cite{KT} to deal with the BO equation in low regularity Sobolev spaces. In a simple connotation,  their method combines  Strichartz estimates for a suitable linearized version of \eqref{bozk} with some energy estimates. In particular, the method does not make use of any gauge-type transformation.  

As is well known, after the work of Koch and Tzvetkov, some improvements of their result, concerning the Cauchy problem for the BO equation, have appeared in the literature (see e.g. \cite{BP}, \cite{IK}, \cite{MP}, \cite{tao}). However, it should be noted that these improvements are established by constructing
appropriate gauge transformations. In the case of the BO-ZK equation, it is not
clear how to get a suitable transformation and we do not know if such an approach
could be applied to improve Theorem \ref{baixaregularidade}. On the other hand, following the strategy of \cite{KK}, maybe one can improve a little bit Theorem \ref{baixaregularidade} by extending the ideas of the present paper. However, due to the lack of some smoothing effects is not clear if that approach works either.

The method to prove Theorem \ref{melhoradoB1} is similar to that of Theorem B, which in turn is based on the results established for the BO equation in \cite{GermanPonce}. So, we need to introduce a suitable weight function, which in some some approximates the one in the definition of the spaces $\mathcal{Z}_{s,r}.$ Commutator estimates involving the Hilbert transform and fractional derivative interpolation inequalities in weighted spaces are then crucial to obtain the results.

\begin{remark}
It is to be observed that the condition $s\geq 2r$ in Theorem \ref{melhoradoB1} (and Theorem B) is motived by the linear part of the equation. In addition, the assumption $r\leq 1$ in part (ii) of Theorem \ref{melhoradoB1} is not restrictive because the the case $r>1$ is covered by Theorem B.
\end{remark}

The plan of the paper is as follows. In Section \ref{Notation} we first introduce the main  notation used throughout the manuscript. Also, we establish the crucial estimates in order to prove Theorem \ref{baixaregularidade}. It will be clear from these estimates why we need to restrict ourselves to the Sobolev index $s>11/8$.  With the estimates established in Section \ref{Notation}, we carry out in Section \ref{proofofTh1} the proof of Theorem \ref{baixaregularidade}. The existence of the solution is obtained as a limit of smooth strong solutions. To establish the continuous dependence, we follow close the arguments in \cite{KT}; it should be noted that such technique has prospects to be applied in several situations. Finally, in Section \ref{proofofth2} we prove Theorem \ref{melhoradoB1}. Since Theorem \ref{baixaregularidade} provides the well-posedness in the Sobolev spaces, we only need to deal with the persistence property in the weighted space.

\section{Notation and Preliminary Results}\label{Notation}

Let us start by introducing some notation used throughout the paper. We use $c$ to denote various constants
that may vary line by line; if necessary we use subscript to indicate
dependence on parameters. With $[A,B]$ we denote the commutator between the
operators $A$ and $B$, that is, $[A,B]=AB-BA$. By $\|\cdot\|_p$ we denote the usual $L^p$ norm.
To simplify, when convenient, we write
$\|\cdot\|$ instead of $\|\cdot\|_2$. The scalar product in $L^2$ will be then represented by
$(\cdot,\cdot)$. If necessary, we use subscript to indicate the variable we are concerned with; for instance, if a function $f=f(\cdots,z,\cdots)$ depends on several variables including the variable $z$, we use $\|f\|_{L^p_z}$ to refer to the $L^p$ norm of $f$ with respect to $z$. If $I\subset\R$ is an interval and $f=f(t,x,y)$, the mixed
space-time norm of $f$ is defined as (for $1\leq p,q,r<\infty$)
$$
\|f\|_{L^p_xL^q_yL^r_I}= \left( \int_{-\infty}^{+\infty} \left(
\int_{-\infty}^{+\infty} \left( \int_I |f(x,y,t)|^r dt
\right)^{q/r} dy \right)^{p/q} dx \right)^{1/p},
$$
with obvious modifications if either $p=\infty$, $q=\infty$ or
$r=\infty$. Norms with interchanged subscript  are similarly
defined. If the subscript $L^r_t$ appears in some norm, that means
one is integrating the variable $t$ on the whole real line. Also, if $I=[0,T]$ we use $L^p_T$ instead of $L^p_I$ or $L^p_{[0,T]}$. Note that if $p=q$ then $\|f\|_{L^p_xL^q_y}=\|f\|_{L^p_{xy}}$. If no confusion is caused, we also use $\|\cdot\|_{L^p_IL^q}$ instead of $\|\cdot\|_{L^p_IL^q_{xy}}$.

For any $s\in \R$, $H^s:=H^s(\R^2)$ represents the usual $L^2$-based Sobolev
space with norm $\|\cdot\|_{H^s}$. The Fourier transform of $f$ is defined as
$$
\hat{f}(\xi,\eta)=\int_{\R^2}e^{-i(x\xi+y\eta)}f(x,y)dxdy.
$$
The inverse Fourier transform of $f$ will be represented by $\check{f}$. Given any complex number $z$, let us define the operator $J^z$ via its Fourier transform by
$$
\widehat{J^zf}(\xi,\eta)=(1+\xi^2+\eta^2)^{z/2}\hat{f}(\xi,\eta).
$$

 For $r>0$, we denote
$$
\mathcal{Z}_{s,r}:=H^s(\R^2)\cap L^2_r,
$$
where $L^2_r:=L^2(\langle x,y\rangle^{2r}dxdy)$. Here, $\langle x,y\rangle:=(1+x^2+y^2)^{1/2}$. The norm in $\mathcal{Z}_{s,r}$ is
given by
$\|\cdot\|_{\mathcal{Z}_{s,r}}^2=\|\cdot\|_{H^s}^2+\|\cdot\|_{L^2_r}^2$.
Also, the subspace $\dot{\mathcal{Z}}_{s,r}$ is defined as
$$
\dot{\mathcal{Z}}_{s,r}:=\{f\in\mathcal{Z}_{s,r}\ | \
\hat{f}(0,\eta)=0, \ \eta\in\R \}.
$$

\begin{definition}
The pair $(p,q)\in \R^2$ is called admissible if $p>8/3$ and
$$\frac{1}{q}+\frac{4}{3p}=\frac{1}{2}.$$
\end{definition}

Let us recall the following lemma, which is our key Strichartz-type estimate and it will be used to prove Lemma \ref{seinfeld}.

\begin{lemma}\label{ademiramin}
If $(p,q)$ is an admissible pair, then
\begin{equation}
\|U(t)f\|_{L^{p}_{t}L^{q}}\leq c \|f\|,
\end{equation}
where $U(t)f=(e^{it\xi(\eta^2-|\xi|)}\hat{f})^{\vee}$ denotes the linear evolution associated with \eqref{bozk}.
\end{lemma}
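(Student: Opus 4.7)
The plan is to follow the classical $TT^{*}$ scheme: prove a frequency-localized dispersive decay estimate of the form $\|U(t)f\|_{L^{\infty}_{xy}}\lesssim |t|^{-3/4}\|f\|_{L^{1}_{xy}}$, interpolate with the unitarity $\|U(t)f\|_{L^{2}_{xy}}=\|f\|_{L^{2}_{xy}}$ (immediate from Plancherel), and then invoke the one-dimensional Hardy--Littlewood--Sobolev inequality in the time variable. The decay rate $3/4$ is dictated by the anisotropic scaling of the linear part ($x\to\lambda^{-1}x$, $y\to\lambda^{-1/2}y$, $t\to\lambda^{-2}t$), and the $TT^{*}$ bookkeeping $\sigma(1-2/q) = 2/p$ reduces, for $\sigma = 3/4$, exactly to the admissibility relation $\frac{1}{q}+\frac{4}{3p}=\frac{1}{2}$.

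For the dispersive bound, I would write $U(t)f = K_t * f$ with
\[
K_t(x,y) = \int_{\R^2} e^{it\phi(\xi,\eta)+i(x\xi+y\eta)}\, d\xi\, d\eta, \qquad \phi(\xi,\eta) = \xi\eta^{2}-\xi|\xi|,
\]
and introduce a Littlewood--Paley decomposition $K_t = \sum_N K_{t,N}$. A direct calculation gives
\[
\det\mathrm{Hess}\,\phi = -4(|\xi|+\eta^{2}),
\]
so the Hessian is non-degenerate of order $N^{2}$ on the bulk of the annular support of each $K_{t,N}$, and Van der Corput / stationary phase produce the desired pointwise bound there. In the problematic slab $\{|\xi|\ll N\}$, one exploits the structure of $\phi$ more carefully: integrating first in $\eta$ (where $\partial_{\eta}^{2}\phi = 2\xi$ is non-degenerate modulo a measure-zero set) and then in $\xi$ via a change of variable adapted to $\xi|\xi|$. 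Summing the dyadic contributions (or absorbing an unavoidable logarithmic or $N^{\varepsilon}$ loss) yields the advertised decay.

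Once the $L^{1}\to L^{\infty}$ decay is in place, the $TT^{*}$ argument is standard: interpolation with $L^{2}$ yields $\|U(t)f\|_{L^{q}_{xy}}\lesssim|t|^{-\frac{3}{4}(1-2/q)}\|f\|_{L^{q'}_{xy}}$, the identity $TT^{*}g(t,\cdot)=\int U(t-s)g(s,\cdot)\, ds$ together with Minkowski and one-dimensional Hardy--Littlewood--Sobolev gives $\|TT^{*}g\|_{L^{p}_{t}L^{q}_{xy}}\lesssim\|g\|_{L^{p'}_{t}L^{q'}_{xy}}$ under the stated admissibility, and duality closes the estimate. The restriction $p>8/3$ corresponds to the HLS exponent $\alpha = 2/p = 3/4$ becoming critical at $p = 8/3$, i.e., $q=\infty$, which is excluded as in the classical non-endpoint Strichartz theory. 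The main difficulty lies squarely in the dispersive step: unlike the smooth Zakharov--Kuznetsov phase $\xi^{3}+\xi\eta^{2}$, the BO--ZK symbol $\xi|\xi|$ is only $C^{1}$ across $\xi=0$, so the Hessian analysis is genuinely singular there and the low-frequency slab must be handled by hand rather than via a black-box stationary phase lemma.
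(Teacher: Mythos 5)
Your overall architecture is exactly the standard one, and it is presumably also that of the proof the paper points to: the paper does not prove this lemma itself but cites Proposition 2.6 of \cite{EP1}, and your numerology is precisely right --- the decay rate $3/4$ from the anisotropic scaling, the $TT^{*}$ relation $2/p=\tfrac34\bigl(1-\tfrac2q\bigr)$ reproducing the line $\tfrac1q+\tfrac{4}{3p}=\tfrac12$, and $p>8/3$ as the non-endpoint condition $q<\infty$. The genuine gap is in the one step you yourself flag as the main difficulty, the dispersive bound. First, dyadic pieces each bounded by $|t|^{-3/4}$ do not sum; to make a Littlewood--Paley scheme close you would need geometric decay of $\|K_{t,N}\|_{L^\infty}$ away from the critical scale $N\sim|t|^{-1/2}$, which your sketch does not supply. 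Second, and more seriously, the parenthetical ``absorbing an unavoidable logarithmic or $N^{\varepsilon}$ loss'' cannot be made good: a lossy decay $|t|^{-3/4+\varepsilon}$ feeds through Riesz--Thorin and Hardy--Littlewood--Sobolev to the shifted line $2/p=(\tfrac34-\varepsilon)(1-\tfrac2q)$, and the lemma asserts the estimate exactly on the lossless admissibility line, so the statement as given would not follow. In other words, as written your proof establishes a weaker Strichartz family, not Lemma \ref{ademiramin}.

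Fortunately the loss is entirely avoidable, and the fix also removes the need for any decomposition. The phase is exactly quadratic in $\eta$, so the $\eta$-integral is a pure Fresnel integral: $\int_{\R}e^{i(t\xi\eta^{2}+y\eta)}\,d\eta=\sqrt{\pi}\,|t\xi|^{-1/2}e^{i\frac{\pi}{4}\sgn(t\xi)}e^{-iy^{2}/(4t\xi)}$, which reduces the kernel to a one-dimensional oscillatory integral with amplitude $|t\xi|^{-1/2}$ and phase $\Psi(\xi)=-t\xi|\xi|+x\xi-\frac{y^{2}}{4t\xi}$. The key structural point, which your Hessian computation hints at but does not exploit, is the sign coincidence $\Psi''(\xi)=-2t\,\sgn(\xi)-\frac{y^{2}}{2t\xi^{3}}$: on each half-line the two terms have the same sign, so $|\Psi''(\xi)|\geq 2|t|$ uniformly in $(x,y)$, and the $C^{1}$ singularity of $\xi|\xi|$ at the origin never intervenes because one works on $\{\xi>0\}$ and $\{\xi<0\}$ separately. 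Splitting at $|\xi|=|t|^{-1/2}$, the trivial bound on the inner region gives $|t|^{-1/2}\cdot(|t|^{-1/2})^{1/2}=|t|^{-3/4}$, and Van der Corput's second-derivative test (the amplitude has bounded variation away from $0$) gives $|t|^{-1}\cdot|t|^{1/4}=|t|^{-3/4}$ on the outer region --- a lossless $\|U(t)f\|_{L^{\infty}}\leq c|t|^{-3/4}\|f\|_{L^{1}}$, uniform in $(x,y)$. This is the route of the cited Proposition 2.6 in \cite{EP1}; with it in hand, your interpolation, $TT^{*}$, Hardy--Littlewood--Sobolev and duality steps close exactly as you describe.
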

\begin{proof}
See Proposition 2.6 in \cite{EP1} for the details.
\end{proof}

To prove Theorem \ref{baixaregularidade} we need some preliminary results, which we shall be concerned with in rest of this section.

\begin{lemma}\label{seinfeld}
Fix $\lambda \geq 1$, $T>0$, and $\sigma >1.$ Let $u:[0,T]\times \R^2 \to \R$ be a solution of the equation
$$u_t +\mathcal{H}u_{xx}+u_{xyy}+Vu_{x}=F,$$ where $V$ and $F$ are suitable given functions.
In addition suppose that
\begin{equation}\label{desilambda}
\mathrm{supp} \ \widehat{u}(t,\cdot,\cdot)\subset \mathcal{B}(0,2\lambda),\quad   t\in [0,T],
\end{equation}
where $\mathcal{B}(0,2\lambda)$ denotes the open ball of radius $2\lambda$ centered at the origin of $\R^2.$
Then, for any admissible pair $(p,q)$, we have
\begin{equation}\label{stone}
\|u\|_{L^{p}_{I}L^{q}}\leq c \Big(1+\|J^{\sigma}V\|_{L^{\infty}_{T}L^2}\Big)\Big(\|u\|_{L^{\infty}_{I}L^{2}}+\|F\|_{L^{1}_{I}L^2}\Big),
\end{equation}
where $I\subset [0,T]$ is an interval such that $|I|\leq c \lambda^{-1}.$
Moreover,
\begin{equation}\label{lemainicial}
\|u\|_{L^{p}_{T}L^{q}}\leq c(1+T)^{1/p}\lambda^{1/p}\Big(1+\|J^{\sigma}V\|_{L^{\infty}_{T}L^{2}}\Big)\Big(\|u\|_{L^{\infty}_{T}L^{2}}+\|F\|_{L^{1}_{T}L^{2}}\Big)
\end{equation}
\end{lemma}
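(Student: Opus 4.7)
My plan is to reduce the variable-coefficient problem to the linear Strichartz estimate of Lemma \ref{ademiramin} on short time intervals via Duhamel's formula, treating the term $Vu_{x}$ as a forcing term controlled by the frequency localization. On an interval $I\subset[0,T]$ with $t_0\in I$, Duhamel gives
\begin{equation*}
u(t)=U(t-t_{0})u(t_{0})+\int_{t_{0}}^{t}U(t-t')\bigl(F(t')-V(t')u_{x}(t')\bigr)\,dt',
\end{equation*}
so combining Lemma \ref{ademiramin} with the Minkowski inequality in $t'$ yields
\begin{equation*}
\|u\|_{L^{p}_{I}L^{q}}\leq c\bigl(\|u(t_{0})\|_{2}+\|F\|_{L^{1}_{I}L^{2}}+\|Vu_{x}\|_{L^{1}_{I}L^{2}}\bigr).
\end{equation*}

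The crucial step is bounding $\|Vu_{x}\|_{L^{1}_{I}L^{2}}$. I would first apply Hölder in the spatial variables followed by the Sobolev embedding $H^{\sigma}(\R^{2})\hookrightarrow L^{\infty}(\R^{2})$ (valid precisely when $\sigma>1$, which is where that hypothesis enters) to obtain
\begin{equation*}
\|V(t')u_{x}(t')\|_{L^{2}_{xy}}\leq \|V(t')\|_{L^{\infty}_{xy}}\|u_{x}(t')\|_{L^{2}_{xy}}\leq c\|J^{\sigma}V(t')\|_{L^{2}}\|u_{x}(t')\|_{L^{2}_{xy}}.
\end{equation*}
Next I use the frequency localization \eqref{desilambda}: since $\widehat{u}(t,\cdot,\cdot)$ is supported in $\mathcal{B}(0,2\lambda)$, Bernstein's inequality yields $\|u_{x}(t')\|_{L^{2}}\leq c\lambda\|u(t')\|_{L^{2}}$. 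Integrating over $I$ and using $|I|\leq c\lambda^{-1}$,
\begin{equation*}
\|Vu_{x}\|_{L^{1}_{I}L^{2}}\leq c|I|\lambda\|J^{\sigma}V\|_{L^{\infty}_{T}L^{2}}\|u\|_{L^{\infty}_{I}L^{2}}\leq c\|J^{\sigma}V\|_{L^{\infty}_{T}L^{2}}\|u\|_{L^{\infty}_{I}L^{2}}.
\end{equation*}
Since $\|u(t_{0})\|_{2}\leq \|u\|_{L^{\infty}_{I}L^{2}}$, assembling these bounds proves \eqref{stone}.

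For \eqref{lemainicial} I would partition $[0,T]$ into $N$ subintervals $I_{j}$ of length at most $c\lambda^{-1}$, where $N\leq c(1+T)\lambda$. Applying \eqref{stone} on each $I_{j}$ and raising to the $p$-th power gives
\begin{equation*}
\|u\|_{L^{p}_{T}L^{q}}^{p}=\sum_{j=1}^{N}\|u\|_{L^{p}_{I_{j}}L^{q}}^{p}\leq c\,N\bigl(1+\|J^{\sigma}V\|_{L^{\infty}_{T}L^{2}}\bigr)^{p}\bigl(\|u\|_{L^{\infty}_{T}L^{2}}+\|F\|_{L^{1}_{T}L^{2}}\bigr)^{p},
\end{equation*}
after dominating each local norm by its global counterpart. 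Taking $p$-th roots with $N\leq c(1+T)\lambda$ gives the factor $(1+T)^{1/p}\lambda^{1/p}$, completing \eqref{lemainicial}.

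The main obstacle I anticipate is the balance in handling the variable coefficient $V$: the gain from the short time length $|I|\sim \lambda^{-1}$ must exactly absorb the loss $\lambda$ coming from $\|u_{x}\|_{L^{2}}\leq c\lambda\|u\|_{L^{2}}$, which is precisely why the interval length is calibrated to $\lambda^{-1}$. The threshold $\sigma>1$ is tight because it is the critical exponent for $H^{\sigma}(\R^{2})\hookrightarrow L^{\infty}(\R^{2})$; any attempt to lower it would force a different (weaker) control on $V$ and break the argument.
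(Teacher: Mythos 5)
Your proposal is correct and follows essentially the same route as the paper's own proof: Duhamel's formula combined with the Strichartz bound of Lemma \ref{ademiramin}, the Sobolev embedding $H^{\sigma}(\R^{2})\hookrightarrow L^{\infty}(\R^{2})$ to control $\|Vu_{x}\|_{L^{2}}$ via $\|J^{\sigma}V\|_{L^{2}}$, Plancherel/Bernstein together with the support condition \eqref{desilambda} to absorb the factor $\lambda$ on intervals of length $\lesssim\lambda^{-1}$, and the partition of $[0,T]$ into at most $c(1+T)\lambda$ such subintervals for \eqref{lemainicial}. There is nothing to correct.
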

 \begin{proof}
For any suitable function  $f:[0,T]\times \R^2 \to \R$,  the solution of
$$u_t+\mathcal{H}u_{xx}+u_{xyy}=f,$$
is given by
\begin{equation}\label{jones}
u(t)=U(\tau-t)u(\tau)+\int_{\tau}^{t}U(t-t')f(t')dt',
\end{equation}
where
$t,\tau\in I\subset [0,T].$
If $f(t')=-Vu_{x}(t')+F(t')$, then by Lemma \ref{ademiramin},
\begin{equation}\label{hemingway}
\|U(\tau-t)u(\tau)\|_{L^{p}_{I}L^{q}}=\|U(-t)U(\tau)u(\tau)\|_{L^{p}_{I}L^{q}}\leq   c \|u\|_{L^{\infty}_{I}L^{2}}.
\end{equation}
Using Sobolev's embedding and Lemma \ref{ademiramin}, we obtain
\begin{equation}\label{hemingway1}
\|u\|_{L^{p}_{I}L^{q}}\leq c\Big( \|u\|_{L^{\infty}_{I}L^{2}}+\|J^{\sigma}V\|_{L^{\infty}_{I}L^{2}}\|u_x\|_{L^{1}_{I}L^2}+\|F\|_{L^{1}_{I}L^2}\Big).
\end{equation}
Moreover,  Plancherel's identity, \eqref{desilambda}, and the condition on the size of $I$ imply
\begin{equation}\label{hemingway2}
\begin{split}
\|u_x\|_{L^{1}_{I}L^2}\leq
|I|\sup_{t\in I}\|\xi \hat{u}(t,\xi,\eta)\|
\leq 2|I|\lambda \|\hat{u}\|_{L^{1}_{I}L^2}
\leq 2c\|u\|_{L^{\infty}_{I}L^2}.
\end{split}
\end{equation}
From \eqref{jones}-\eqref{hemingway2}, we get \eqref{stone}.

Now, because $\lambda \geq 1,$ we can choose a partition $[0,T]=\bigcup_{k=1}^{n}I_k$, such that the intervals $I_k$ satisfy
$|I_k|\leq \lambda^{-1}$ with
\begin{equation}\label{chave}
n\leq (1+T)\lambda.
\end{equation}
Therefore, in view of \eqref{stone},
\[
\begin{split}
\|u\|_{L^{p}_{I_k}L^q}&\leq  c\Big(1+\|J^{\sigma}V\|_{L^{\infty}_{T}L^{2}}\Big)\Big(\|u\|_{L^{\infty}_{I_k}L^{2}}+\|F\|_{L^{1}_{I_k}L^2}\Big)\\
&\leq  c\Big(1+\|J^{\sigma}V\|_{L^{\infty}_{T}L^2}\Big)\Big(\|u\|_{L^{\infty}_{T}L^2}+\|F\|_{L^{1}_{T}L^2}\Big)
\end{split}
\]
and
\begin{equation}\label{jon}
\begin{split}
\|u\|^{p}_{L^{p}_{T}L^q}&=\sum_{k=1}^{n}\|u\|^{p}_{L^{p}_{I_k}L^q}\\
&\leq  c\sum_{k=1}^{n}\Big\{\Big(1+\|J^{\sigma}V\|_{L^{\infty}_{T}L^2}\Big)\Big(\|u\|_{L^{\infty}_{T}L^2}+\|F\|_{L^{1}_{T}L^2}\Big)\Big\}^{p}.
\end{split}
\end{equation}
From \eqref{chave} and \eqref{jon}, we obtain \eqref{lemainicial}. The proof of the lemma is thus completed.
\end{proof}

Now we introduce the Littlewood-Paley multipliers.
Let $\chi$ be a function in $C_{0}^{\infty}(\R^2)$ such that $\chi=1$ on $\mathcal{B}(0,1/2)$ and $\chi=0$ on $\R^2 \backslash \mathcal{B}(0,1)$. Define
$$\varphi(\xi,\eta):=\chi \left(\frac{\xi}{2},\frac{\eta}{2}\right)-\chi(\xi,\eta).$$ Then, $\mathrm{supp} \ \varphi \subset \mathcal{B}(0,2)\backslash \mathcal{B}(0,1/2)$ and
$$1=\chi(\xi,\eta)+\sum_{k=0}^{\infty}\varphi \left(\frac{\xi}{2^{k}},\frac{\eta}{2^{k}}\right).
$$
Next, we define the multipliers $\Delta_\lambda$ through the Fourier transform as
\begin{equation}
{\displaystyle
\widehat{\Delta_{\lambda}f}(t,\xi,\eta)=\begin{cases}
\varphi \left(\dfrac{\xi}{\lambda}, \dfrac{\eta}{\lambda} \right)\hat{f}(t,\xi,\eta), \;\lambda=2^k,\; k\geq 1, \\
\chi(\xi,\eta)\hat{f}(t,\xi,\eta), \; \lambda=1.
\end{cases}}
\end{equation}
Let $f_\lambda:=\Delta_{\lambda} f,$ then
$$f=\sum_{\lambda}f_\lambda,$$
where the convergence of the series holds, for instance, in $L^2(\R^2)$.

In what follows, we denote by $\lambda$ any diadic integer number, that is, $\lambda=2^{k}, \ k\in \N, k\geq 1$. Let $\lambda$ be a diadic integer, we also define
\begin{equation}
\tilde{\Delta}_{\lambda}=\begin{cases}
\Delta_{\lambda/2}+\Delta_\lambda + \Delta_{2\lambda}, \ \lambda>1, \\
\Delta_1 + \Delta_2, \ \lambda=1.
\end{cases}
\end{equation}

It is  easy to see that if $u$ is a solution of \eqref{bozk}, then  $u_\lambda=\Delta_\lambda u$ satisfy the following equation
\begin{equation}\label{monica}
\partial_{t} u_\lambda+\mathcal{H}\partial_{x}^2 u_\lambda +\partial_{x}\partial_{y}^{2}u_\lambda + u\partial_{x}u_\lambda=-[\Delta_\lambda,u\partial_x]u.
\end{equation}

The next result will be used in the proof of Lemma \ref{lemaadmissivel}.

\begin{lemma}\label{commutator}
There is a constant $c>0$ such that for any  $w\in L^{2}(\R^2)$ and any $v$ such that $\nabla v \in L^{\infty}(\R^2)$,
$$\|[\Delta_{\lambda}, v\partial_{x}]w\|\leq c \|\nabla v\|_{L^{\infty}}\|w\|.$$
\end{lemma}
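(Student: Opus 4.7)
The plan is to treat this as a standard Calderón/Coifman--Meyer-type commutator estimate adapted to two dimensions. Since $\Delta_\lambda$ acts by Fourier multiplication against $\varphi(\xi/\lambda,\eta/\lambda)$ (or $\chi$ when $\lambda=1$), it is a convolution operator: $\Delta_\lambda f = \phi_\lambda \ast f$, where $\phi_\lambda(x,y) = \lambda^{2}\phi(\lambda x, \lambda y)$ and $\phi = \check{\varphi}$ is Schwartz (with obvious adaptation for $\lambda=1$, replacing $\phi$ by $\check{\chi}$, which is also Schwartz). So I would write
\[
[\Delta_\lambda, v\partial_x]w(x,y) = \int_{\R^2} \phi_\lambda(x-x', y-y')\bigl[v(x',y') - v(x,y)\bigr]\,\partial_{x'}w(x',y')\,dx'dy'.
\]

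Next, I would integrate by parts in the $x'$ variable. Since $\partial_{x'}\phi_\lambda(x-x',y-y') = -(\partial_x\phi_\lambda)(x-x',y-y')$, this produces two contributions:
\[
[\Delta_\lambda, v\partial_x]w(x,y) = I_\lambda w(x,y) + II_\lambda w(x,y),
\]
where
\[
I_\lambda w(x,y) = \int (\partial_x\phi_\lambda)(x-x', y-y')\bigl[v(x',y')-v(x,y)\bigr]w(x',y')\,dx'dy',
\]
and $II_\lambda w = -\Delta_\lambda(v_x w)$. The term $II_\lambda$ is immediate: $\|II_\lambda w\|_2 \leq \|\Delta_\lambda\|_{L^2\to L^2}\|v_x w\|_2 \leq c\|v_x\|_\infty \|w\|_2 \leq c\|\nabla v\|_\infty \|w\|_2$, since $\Delta_\lambda$ is a bounded Fourier multiplier uniformly in $\lambda$.

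The main (but still routine) step is bounding $I_\lambda$. Here I would use the mean value theorem to estimate $|v(x',y')-v(x,y)| \leq \|\nabla v\|_\infty \cdot |(x-x', y-y')|$, so that $|I_\lambda w| \leq \|\nabla v\|_\infty \,(K_\lambda \ast |w|)$, where $K_\lambda(z) := |z|\,|(\partial_x\phi_\lambda)(z)|$. The scaling $\partial_x\phi_\lambda(z) = \lambda^{3}(\partial_x\phi)(\lambda z)$ combined with the change of variables $u=\lambda z$ yields
\[
\|K_\lambda\|_{L^1(\R^2)} = \int_{\R^2} |z|\,\lambda^{3}|(\partial_x\phi)(\lambda z)|\,dz = \int_{\R^2} |u|\,|(\partial_x\phi)(u)|\,du,
\]
which is a finite constant independent of $\lambda$ because $\phi$ is Schwartz. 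Young's convolution inequality then gives $\|I_\lambda w\|_2 \leq \|K_\lambda\|_1 \|\nabla v\|_\infty \|w\|_2 \leq c\|\nabla v\|_\infty \|w\|_2$. Combining the estimates for $I_\lambda$ and $II_\lambda$ yields the desired bound.

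The only subtlety I anticipate is the separate treatment of $\lambda=1$, where $\varphi$ is replaced by $\chi$; however, since $\check\chi$ is equally Schwartz and the above manipulations used only the Schwartz character of $\phi$, the same argument applies verbatim. No genuinely new ingredient is required beyond the standard commutator-kernel trick of gaining one derivative from the $|z|$ factor via the mean value theorem.
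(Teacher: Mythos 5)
Your proof is correct and follows essentially the same route as the paper: the same splitting (isolating the bounded term $\Delta_\lambda(v_x w)$ and the kernel term carrying the difference $v(z_2)-v(z_1)$), the same mean value estimate, and the same scaling computation showing the kernel has $L^1$ norm independent of $\lambda$. The only cosmetic difference is that you close with Young's convolution inequality where the paper invokes Schur's lemma, which amounts to the same bound since the kernel is of convolution type.
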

\begin{proof}
Without loss of generality, we suppose that $w$ belongs to the Schwartz space. Now we write
\begin{equation}\label{robben}
\begin{split}
[\Delta_\lambda, v\partial_x]w
&= \Delta_\lambda \partial_x (vw)-\Delta_\lambda (w\partial_x v)-v\partial_x \Delta_\lambda w \\
&:=A-B-C.
\end{split}
\end{equation}
By  using that $\Delta_\lambda$ is bounded in $L^{2}$, it is easily seen that
\begin{equation}
\|B\|=\|\Delta_\lambda (wv_x)\|\leq \|w\|\|v_x\|_{L^{\infty}}\leq c \|w\|\|\nabla v\|_{L^{\infty}}.
\end{equation}
On the other hand, if $\Phi$ denotes either $\chi$ or $\varphi$,  we have
$$
\Delta_\lambda f(x,y)=\lambda^2 (\check{\Phi}(\lambda \cdot)\ast f)(x,y).
$$
Let us write $z_1=(x,y)$ for an arbitrary point in $\R^2$. Then we can write
\begin{eqnarray*}
A=\lambda^3 \int_{\R^2} v(z_2)w(z_2)\partial_x \check{\Phi}(\lambda (z_1-z_2))dz_2
\end{eqnarray*}
and
\begin{eqnarray*}
C= \lambda^3 v(z_1)\int_{\R^2} w(z_2)\partial_x \check{\Phi}(\lambda(z_1-z_2))dz_2.
\end{eqnarray*}
Therefore,
\[
\begin{split}
A-C&=\int_{\R^2} w(z_2)\Big\{\lambda^3 \Big(\partial_x \check{\Phi}(\lambda (z_1-z_2)\Big)\Big(v(z_2)-v(z_1))\Big)\Big\}dz_2\\
&\equiv\int_{\R^2}w(z_2)K(z_1,z_2)dz_2,
\end{split}
\]
where $K$ stands for the obvious kernel.
Now, by the mean value inequality,
  $$ |v(z_1)-v(z_2)|\leq \|\nabla v\|_{L^{\infty}}|z_1-z_2|.$$
Therefore, for any $z_2 \in \R^2$,
\begin{eqnarray*}
\int_{\R^2}|K(z_1,z_2)|dz_1 
\leq  c \|\nabla v\|_{L^{\infty}},
\end{eqnarray*}
and
$$\sup_{z_2}\int_{\R^2}|K(z_1,z_2)|dz_1 \leq c \|\nabla v\|_{L^{\infty}}.$$
Similarly,
$$\sup_{z_1}\int_{\R^2}|K(z_1,z_2)|dz_2 \leq c \|\nabla v\|_{L^{\infty}}. $$
Thus, an application of Schur's lemma yields
\begin{equation}\label{muller}
\|A-C\|\leq c \|\nabla v\|_{L^{\infty}}\|w\|.
\end{equation}
Gathering together \eqref{robben}--\eqref{muller} we obtain the result.
\end{proof}

The next lemma will be used in the proof of the Proposition  \ref{principal}.

\begin{lemma}\label{lemaadmissivel}
Let $\sigma>1$ and $T>0$. If  $u$ is a smooth solution of  \eqref{bozk}, then for any admissible pair $(p,q)$, we have
\[
\begin{split}
\left \{ \sum_{\lambda}\lambda^{2\sigma}\|u_\lambda\|^{2}_{L^{p}_{T}L^{q}}   \right\}^{1/2}\leq &\ c (1+T)^{1/p}\Big(1+\|J^{\sigma}u\|_{L^{\infty}_{T}L^{2}}\Big)\Big(1+\|\nabla u\|_{L^{1}_{T}L^{\infty}}\Big) \\
&\times\left \{\sum_{\lambda}\lambda^{2\sigma +2/p}\|u_\lambda\|^{2}_{L^{\infty}_{T}L^{2}} \right\}^{1/2}.
\end{split}
\]
\end{lemma}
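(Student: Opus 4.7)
The plan is to apply the Strichartz-type bound from Lemma \ref{seinfeld} separately to each Littlewood--Paley piece $u_\lambda$. Since $\widehat{u_\lambda}$ is supported in $\mathcal{B}(0,2\lambda)$, hypothesis \eqref{desilambda} is automatic, and in equation \eqref{monica} we identify $V=u$ and $F=-[\Delta_\lambda,u\partial_x]u$. Estimate \eqref{lemainicial} then gives
$$
\|u_\lambda\|_{L^p_T L^q} \leq c(1+T)^{1/p}\lambda^{1/p}\bigl(1+\|J^\sigma u\|_{L^\infty_T L^2}\bigr)\bigl(\|u_\lambda\|_{L^\infty_T L^2} + \|[\Delta_\lambda,u\partial_x]u\|_{L^1_T L^2}\bigr).
$$

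Squaring, multiplying by $\lambda^{2\sigma}$, summing over dyadic $\lambda$, and taking square roots extracts the target $\ell^2$-norm on the left-hand side. On the right, one immediately recognizes the term $\bigl(\sum_\lambda\lambda^{2\sigma+2/p}\|u_\lambda\|_{L^\infty_T L^2}^2\bigr)^{1/2}$, while the remaining contribution comes from the commutator. It therefore suffices to prove
$$
\Bigl(\sum_\lambda \lambda^{2\sigma+2/p}\|[\Delta_\lambda,u\partial_x]u\|_{L^1_T L^2}^{2}\Bigr)^{1/2} \leq c\,\|\nabla u\|_{L^1_T L^\infty}\Bigl(\sum_\lambda \lambda^{2\sigma + 2/p}\|u_\lambda\|_{L^\infty_T L^2}^2\Bigr)^{1/2}.
$$

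The main obstacle is precisely this sharpened commutator estimate. A naive application of Lemma \ref{commutator} with $v=w=u$ yields only $\|[\Delta_\lambda,u\partial_x]u\|_{L^2}\leq c\|\nabla u\|_{L^\infty}\|u\|_{L^2}$, which is $\lambda$-independent and produces a divergent series. To rescue summability, I would exploit the frequency support of the commutator: expanding $u=\sum_\mu u_\mu$ gives $[\Delta_\lambda,u\partial_x]u=\sum_{\mu_1,\mu_2}[\Delta_\lambda,u_{\mu_1}\partial_x]u_{\mu_2}$, and only pairs $(\mu_1,\mu_2)$ whose bilinear output contains frequencies in the dyadic shell $\{|(\xi,\eta)|\sim\lambda\}$ survive. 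Inspecting the surviving low-high and high-high interactions and applying Lemma \ref{commutator} term by term, one extracts a pointwise-in-time estimate of the form
$$
\|[\Delta_\lambda,u\partial_x]u\|_{L^2}\leq c\|\nabla u\|_{L^\infty}\sum_{\mu}a_{\lambda\mu}\|u_\mu\|_{L^2},
$$
with coefficients $a_{\lambda\mu}$ concentrated on $\mu\gtrsim\lambda$ and summable in $\mu$ uniformly in $\lambda$ (and vice versa).

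Once this localized commutator bound is in hand, Minkowski's inequality in time transfers it to the $L^1_T L^2$ norm, and a Schur-type (discrete Young) inequality for the bilinear form $(a_{\lambda\mu})$ applied to the sequence $\mu^{\sigma+1/p}\|u_\mu\|_{L^\infty_T L^2}$ yields the desired $\ell^2$ bound with constant $c\,\|\nabla u\|_{L^1_T L^\infty}$. Plugging back into the dyadic Strichartz inequality completes the proof. The restriction $\sigma>1$ enters only through the Sobolev embedding invoked implicitly in Lemma \ref{seinfeld}; the present step is uniform in $\sigma$ and does not impose additional conditions.
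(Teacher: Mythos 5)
Your overall architecture coincides with the paper's: apply Lemma \ref{seinfeld} (estimate \eqref{lemainicial}) to each $u_\lambda$ via \eqref{monica} with $V=u$ and $F=-[\Delta_\lambda,u\partial_x]u$, square, weight by $\lambda^{2\sigma}$, sum over dyadic $\lambda$, and reduce the lemma to a frequency-localized commutator bound closed by a Schur-type dyadic summation. Your reduction to the displayed sharpened commutator estimate is correct, and your diagnosis that a naive application of Lemma \ref{commutator} gives a $\lambda$-independent bound that cannot be summed is exactly the right observation.

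However, two steps in your execution would not go through as stated. First, decomposing \emph{both} factors and ``applying Lemma \ref{commutator} term by term'' to $[\Delta_\lambda,u_{\mu_1}\partial_x]u_{\mu_2}$ produces, in the low-high regime, the factor $\sum_{\mu_1\lesssim\lambda}\|\nabla u_{\mu_1}\|_{L^\infty}$, which is a Besov-type $\ell^1$ quantity \emph{not} controlled by $\|\nabla u\|_{L^\infty}$ (you lose at least a logarithm in $\lambda$). The paper avoids this by never decomposing the multiplier: using $\Delta_\lambda\tilde{\Delta}_\lambda=\Delta_\lambda$ it writes $[\Delta_\lambda,u\partial_x]u=[\Delta_\lambda,u\partial_x]\tilde{\Delta}_\lambda u+\Delta_\lambda\big(u\partial_x(1-\tilde{\Delta}_\lambda)u\big)$ (identity \eqref{paris}); Lemma \ref{commutator} is invoked once, with $v=u$ kept whole and $w=\tilde{\Delta}_\lambda u$, while the remainder term uses only $L^2$-boundedness of $\Delta_\lambda$, $L^\infty$-boundedness of $1-\tilde{\Delta}_\lambda$, and the support observation that frequencies $\mu\le\lambda/16$ of $u$ do not contribute. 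Your scheme is repairable by grouping all low frequencies into a single partial sum $S_{c\lambda}u$ before invoking the commutator lemma, since $\|\nabla S_{c\lambda}u\|_{L^\infty}\le c\|\nabla u\|_{L^\infty}$, but that regrouping is precisely the missing step. Second, your claim that the coefficients $a_{\lambda\mu}$ are ``summable in $\mu$ uniformly in $\lambda$'' is false for the surviving far interactions: there $a_{\lambda\mu}\approx 1$ for all $\mu\ge\lambda/8$, and $\sum_{\mu\ge\lambda/8}1$ diverges. What is true --- and what the paper proves in \eqref{abobora}--\eqref{mamao} by duality and Cauchy--Schwarz --- is that the \emph{weighted} matrix $(\lambda/\mu)^{s}\mathbf{1}_{\{\mu\ge\lambda/8\}}$, with $s=\sigma+1/p$, passes the Schur test because the geometric series $\sum_{j\ge-3}2^{-sj}$ converges for $s>0$. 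Accordingly, your closing remark that this step ``is uniform in $\sigma$ and does not impose additional conditions'' is off: the constant depends on $\sigma$ and $p$ through this series and degenerates as $\sigma+1/p\to 0^{+}$, although under the standing hypothesis $\sigma>1$ no additional restriction is in fact needed.
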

\begin{proof}
It is easy to see that
$$\mathrm{supp} \ \widehat{u}_{\lambda}(t,\cdot,\cdot)\subset \mathcal{B}(0,2\lambda), \  t\in [0,T].$$
In view of \eqref{monica} and Lemma \ref{seinfeld} with $V=u$ and $F=-[\Delta_{\lambda},u\partial_{x}]u,$ we deduce
\begin{equation}\label{two}
\|u_\lambda\|^{2}_{L^{p}_{T}L^{q}}\leq c (1+T)^{2/p}\lambda^{2/p}\Big(1+\|J^{\sigma}u\|_{L^{\infty}_{T}L^{2}}\Big)^2 \Big(\|u_\lambda\|_{L^{\infty}_{T}L^2}+\|[\Delta_{\lambda},u\partial_x]u\|_{L^{1}_{T}L^{2}}^{2}\Big).
\end{equation}
Since $\Delta_\lambda \tilde{\Delta}_\lambda=\Delta_\lambda,$ we obtain
\begin{equation}\label{paris}
[\Delta_\lambda, u \partial_x]u=[\Delta_\lambda, u\partial_x]\tilde{\Delta}_\lambda u+\Delta_\lambda (u\partial_{x}(1-\tilde{\Delta}_\lambda)u).
\end{equation}
By Lemma \ref{commutator},
\begin{equation}\label{banana}
\|[\Delta_\lambda,u\partial_x]\tilde{\Delta}_\lambda u\|_{L^{1}_{T}L^2}\leq c \|\nabla u\|_{L^{1}_{T}L^{\infty}}\|\tilde{\Delta}_{\lambda}u\|_{L^{\infty}_{T}L^2}.
\end{equation}
Moreover, by the definition of $\tilde{\Delta}_{\lambda}$,
\begin{equation*}
\begin{split}
\sum_{\lambda}\lambda^{2\sigma + 2/p}&\|\tilde{\Delta}_{\lambda}u\|^{2}_{L^{\infty}_{T}L^2}\\
\leq& \sum_{\lambda}\left( \lambda^{2\sigma+2/p}\|u_{\lambda/2}\|^{2}_{L^{\infty}_{T}L^{2}}+\lambda^{2\sigma + 2/p}\|u_\lambda\|^{2}_{L^{\infty}_{T}L^{2}}+\lambda^{2\sigma+2/p}\|u_{2\lambda}\|^{2}_{L^{\infty}_{T}L^{2}} \right)\\
\leq& \sum_{\lambda}( 2^{2\sigma+2/p}(\lambda/2)^{2\sigma+2/p}\|u_{\lambda/2}\|^{2}_{L^{\infty}_{T}L^{2}}+\lambda^{2\sigma + 2/p}\|u_\lambda\|^{2}_{L^{\infty}_{T}L^{2}} \\
   &+ \frac{1}{2^{2\sigma+2/p}}(2\lambda)^{2\sigma+2/p}\|u_{2\lambda}\|^{2}_{L^{\infty}_{T}L^{2}} )\\
\leq& c_{\sigma,p}\sum_{\lambda}\lambda^{2\sigma+2/p}\|u_\lambda\|_{L^{\infty}_{T}L^{2}}^{2}.
\end{split}
\end{equation*}
It remains to estimate
$$\|\Delta_\lambda (u\partial_{x}(1-\tilde{\Delta}_\lambda)u)\|_{L^{1}_{T}L^2}.$$
For this, we see that the frequencies of order $\leq \lambda/16$ in the Littlewood-Paley decomposition of $u$ has null contribution in the summation. Then, since $1-\tilde{\Delta}_{\lambda}$ is bounded in $L^{\infty}$, we get
\begin{equation}\label{friends}
\begin{split}
\|\Delta_\lambda (u\partial_{x}(1-\tilde{\Delta}_\lambda)u)\|_{L^{1}_{T}L^{2}} &\leq  c \sum_{\mu\geq \lambda/8}\|(1-\tilde{\Delta}_{\lambda})u_x\|_{L^{1}_{T}L^{\infty}}\|u_\mu\|_{L^{\infty}_{T}L^{2}}\\
&\leq c\|u_x\|_{L^{1}_{T}L^{\infty}}\sum_{\mu \geq \lambda/8}\|u_\mu\|_{L^{\infty}_{T}L^{2}}.
\end{split}
\end{equation}
Hence, what is left is to show that
\begin{eqnarray}\label{abobora}
\sum_{\lambda}\lambda^{2\sigma+2/p}\bigg( \sum_{\mu\geq \lambda/8} \|u_\mu\|_{L^{\infty}_{T}L^{2}}\bigg)^{2}\leq c\sum_{\lambda}\lambda^{2\sigma+2/p}\|u_{\lambda}\|_{L^{\infty}_{T}L^2}^{2}.
\end{eqnarray}
Let us define $s:=\sigma+1/p$ and set $A=\{2^j : j\in \N\}$. By duality,
\begin{eqnarray*}
\left [ \sum_{\lambda}\lambda^{2s}\bigg( \sum_{\mu\geq \lambda/8} \|u_\mu\|_{L^{\infty}_{T}L^{2}}\bigg)^{2}\right]^{1/2}
&=&\sup_{\|d_\lambda\|_{l^{2}(A)}=1}\sum_{\lambda}\lambda^{s}\sum_{\mu\geq \lambda/8}\|u_\lambda\|_{L^{\infty}_{T}L^2}d_\lambda,
\end{eqnarray*}
where $(d_\lambda)$ is a real diadic sequence.
Therefore, it suffices to show that
\begin{equation*}
\sum_{\lambda}\lambda^{s}\sum_{\mu\geq \lambda/8} \|u_\mu\|_{L^{\infty}_{T}L^{2}}d_\lambda \leq c \left\{\sum_{\lambda}\lambda^{2s}\|u_\lambda\|^{2}_{L^{\infty}_{T}L^2}\right\}^{1/2}\left\{\sum_{\lambda}d_{\lambda}^{2}\right\}^{1/2}.
\end{equation*}
For this, let $\mu=2^{j}\lambda$, $j\in \Z,$ $j\geq -3$. Thus,
\begin{equation} \label{mamao}
\begin{split}
\sum_{\lambda}\lambda^{s}\sum_{\mu\geq \lambda/8} \|u_\mu\|_{L^{\infty}_{T}L^{2}}d_\lambda &= \sum_{j\geq -3}2^{-sj}\sum_{\lambda \geq 8}(2^{j}\lambda)^{s}\|u_{2^{j}\lambda}\|_{L^{\infty}_{T}L^2}d_\lambda \\
&\leq  \sum_{j\geq -3}2^{-sj}\bigg [\sum_{\lambda \geq 8}(2^{j}\lambda)^{2s}\|u_{2^{j}\lambda}\|_{L^{\infty}_{T}L^2}^{2}\bigg]^{1/2} \bigg [\sum_{\lambda\geq 8}d_{\lambda}^2 \bigg]^{1/2}\\
&=\sum_{j\geq -3}2^{-sj}\bigg [\sum_{\gamma \geq 2^{3+j}}\gamma^{2s}\|u_{\gamma}\|_{L^{\infty}_{T}L^2}^{2}\bigg]^{1/2} \bigg [\sum_{\lambda\geq 8}d_{\lambda}^2 \bigg]^{1/2}\\
&\leq c \left\{\sum_{\lambda}\lambda^{2s}\|u_\lambda\|^{2}_{L^{\infty}_{T}L^2}\right\}^{1/2}\left\{\sum_{\lambda}d_{\lambda}^{2}\right\}^{1/2}.
\end{split}
\end{equation}
This establishes \eqref{abobora} and the proof of the lemma is completed.
\end{proof}

\begin{lemma}\label{lemap}
Let $\sigma>1,$ $p>8/3$, and $T>0$. If $u$ is a smooth solution of \eqref{bozk} then
$$\left \{\sum_{\lambda}\lambda^{2\sigma +2/p}\|u_\lambda\|^{2}_{L^{\infty}_{T}L^{2}} \right\}^{1/2}\leq c (1+\|\nabla u\|_{L^{1}_{T}L^{\infty}})\|J^{\sigma+1/p}u\|_{L^{\infty}_{T}L^2}.$$
\end{lemma}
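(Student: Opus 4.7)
The strategy is a dyadically localized $L^2$-energy estimate, run on equation \eqref{monica} for $u_\lambda$, followed by summation in $\lambda$ with the weights $\lambda^{2\sigma+2/p}$ and an application of the Littlewood--Paley characterization $\|J^{\sigma+1/p}f\|_2^2\approx\sum_\lambda\lambda^{2\sigma+2/p}\|\Delta_\lambda f\|_2^2$. The non-trivial input is a pointwise-in-time version of the commutator bookkeeping already developed in the proof of Lemma \ref{lemaadmissivel}.

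Taking the $L^2$ inner product of \eqref{monica} with $u_\lambda$, the operators $\mathcal{H}\partial_x^2$ and $\partial_x\partial_y^2$ contribute nothing by skew-adjointness, while $(u\partial_xu_\lambda,u_\lambda)=-\tfrac12(u_x,u_\lambda^2)$, giving
$$
\frac{d}{dt}\|u_\lambda\|_2^2\leq\|u_x\|_{L^\infty}\|u_\lambda\|_2^2+2\|u_\lambda\|_2\|[\Delta_\lambda,u\partial_x]u\|_2.
$$
Integrating on $[0,t]$, passing to the supremum over $t\in[0,T]$, multiplying by $\lambda^{2\sigma+2/p}$, and summing over the dyadic $\lambda$, I would obtain, with $G:=\|J^{\sigma+1/p}u\|_{L^\infty_TL^2}$, that the initial-data contribution is $\leq\|J^{\sigma+1/p}u(0)\|_2^2\leq G^2$; the linear-in-$u_x$ time integral is controlled by $\|u_x\|_{L^1_TL^\infty}G^2$ (after pulling the sum inside the integral and using the Littlewood--Paley equivalence at each time); and the commutator time integral is, by Cauchy--Schwarz in $\lambda$, bounded by
$$
2\int_0^T G\Bigl(\sum_\lambda\lambda^{2\sigma+2/p}\|[\Delta_\lambda,u\partial_x]u(\tau)\|_2^2\Bigr)^{1/2}d\tau.
$$

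The remaining inner sum is the main obstacle, and it is handled by a pointwise-in-time re-run of the argument in Lemma \ref{lemaadmissivel}: write $[\Delta_\lambda,u\partial_x]u=[\Delta_\lambda,u\partial_x]\tilde{\Delta}_\lambda u+\Delta_\lambda(u\partial_x(1-\tilde{\Delta}_\lambda)u)$, estimate the first piece by Lemma \ref{commutator}, and the second by the frequency-support observation that only Littlewood--Paley pieces of $u$ at scales $\mu\geq\lambda/8$ contribute. Invoking the Schur-type inequality \eqref{abobora} then yields
$$
\Bigl(\sum_\lambda\lambda^{2\sigma+2/p}\|[\Delta_\lambda,u\partial_x]u(\tau)\|_2^2\Bigr)^{1/2}\leq c\|\nabla u(\tau)\|_{L^\infty}G.
$$
Substituting back gives $\sum_\lambda\lambda^{2\sigma+2/p}\|u_\lambda\|_{L^\infty_TL^2}^2\leq c\,(1+\|\nabla u\|_{L^1_TL^\infty})\,G^2$, and the claim follows after taking square roots (using $\sqrt{1+x}\leq 1+x$ for $x\geq 0$). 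Conceptually, the only non-routine piece is transcribing the Schur argument of \eqref{mamao}--\eqref{abobora} to a fixed time $\tau$, but since nothing there used the time variable in an essential way, this transcription is immediate.
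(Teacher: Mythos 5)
Your proposal is correct and follows essentially the same route as the paper's proof: the same localized energy identity derived from \eqref{monica}, the same splitting \eqref{paris} of the commutator with Lemma \ref{commutator} handling the first piece, and the same fixed-time use of the dyadic Schur-type estimate \eqref{mamao}--\eqref{abobora} (the paper likewise applies it pointwise in $t$, taking $d_\lambda=\lambda^{\sigma+1/p}\|u_\lambda(t)\|$). The only cosmetic difference is where Cauchy--Schwarz in $\lambda$ is applied --- you split off $\|J^{\sigma+1/p}u\|_{L^\infty_TL^2}$ first and bound a weighted square function of the commutator, while the paper keeps the bilinear sum $\sum_\lambda\lambda^{2\sigma+2/p}\|u_\lambda(t)\|\,\|[\Delta_\lambda,u\partial_x]u(t)\|$ and invokes the duality form directly --- an equivalent rearrangement of the same estimate.
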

\begin{proof}
Let $s:=\sigma + \frac{1}{p}$. Multiplying  \eqref{monica} by $u_\lambda$, using  Plancherel's identity and integration by parts, we obtain
\begin{equation}\label{xuxu}
\begin{split}
\|u_\lambda (t)\|^{2}&=\|u_\lambda(0)\|^2 +\mathrm{Re} \int_{0}^{t} \int_{\R^2} u_x(\tau) u_{\lambda}^{2}(\tau)dxdyd\tau\\
&\quad-2\mathrm{Re} \int_{0}^{t}\int_{\R^2} [\Delta_\lambda,u\partial_x]u(\tau)u_\lambda(\tau)dxdyd\tau.
\end{split}
\end{equation}
Therefore,
\begin{equation*}
\begin{split}
\sum_{\lambda}\lambda^{2s}\|u_\lambda\|^{2}_{L^{\infty}_{T}L^2}\leq&  \sum_{\lambda}\lambda^{2s}\|u_\lambda(0)\|^{2}+\sum_{\lambda}\lambda^{2s}\int_{0}^{T}\| u_{x}(t)\|_{L^{\infty}}\|u_\lambda (t)\|^{2}dt +\\
&+ \sum_{\lambda}\lambda^{2s}\int_{0}^{T}\|u_\lambda (t)\|\|([\Delta_\lambda, u\partial_x]u)(t)\|dt\\
:=&\ J_0 +J_1+J_2.
\end{split}
\end{equation*}
We estimate the terms $J_i$ below. It is easy to see that
\begin{equation}\label{gerar1}
J_0 \leq c \|u(0)\|_{H^{s}}^2=c \|J^{s}u(0)\|^2\leq c \|J^s u\|^{2}_{L^{\infty}_{T}L^2}.
\end{equation}
Next, by exchanging the summation in $\lambda$ and the integration in $t$, we get
\begin{eqnarray}\label{gerar2}
J_1 
\leq  c\|u_x\|_{L^{1}_{T}L^{\infty}}\|J^{s} u\|^{2}_{L^{\infty}_{T}L^2}.
\end{eqnarray}
To estimate $J_2$, we first use identity \eqref{paris} to deduce that
\[
\begin{split}
J_2 &\leq \sum_{\lambda}\lambda^{2s}\int_{0}^{T}\|u_\lambda (t)\|\|([\Delta_\lambda, u\partial_x]\tilde{\Delta}_\lambda u)(t)\|dt\\
&\quad+\sum_{\lambda}\lambda^{2s}\int_{0}^{T}\|u_\lambda (t)\|\|\Delta_\lambda (u\partial_x (1-\tilde{\Delta}_{\lambda})u)(t)\|dt\\
&=J_{21}+J_{22}.
\end{split}
\]
An application of Lemma \ref{commutator} yields
\begin{equation}\label{norah}
\begin{split}
J_{21}&\leq  \int_{0}^{T}\bigg(\|\nabla u (t)\|_{L^{\infty}}\sum_{\lambda} \lambda^{2s}\|u_\lambda (t)\|\|\tilde{\Delta}_{\lambda}u(t)\| \bigg)dt\\
&\leq  c\int_{0}^{T} \|\nabla u (t)\|_{L^{\infty}}\|u(t)\|_{H^s}^2dt\\
&\leq  c\|\nabla u\|_{L^{1}_{T}L^{\infty}}\|J^{s}u\|^{2}_{L^{\infty}_{T}L^{2}}.
\end{split}
\end{equation}
Moreover, as in the proof of Lemma \ref{lemaadmissivel},
\[
\begin{split}
J_{22}&\leq  \sum_{\lambda} \lambda^{2s}\int_{0}^{T}\|u_\lambda (t)\|\bigg( \sum_{\mu\geq \lambda/8} \|(1-\tilde{\Delta}_{\lambda})u_x\|_{L^{\infty}}\|u_\mu\|\bigg)dt\\
&\leq  \int_{0}^{T} \|u_x (t)\|_{L^{\infty}}\bigg( \sum_{\lambda} \lambda^{2s}\|u_\lambda (t)\|\sum_{\mu\geq \lambda/8}\|u_\mu (t)\|\bigg)dt.
\end{split}
\]
By setting $d_\lambda = \lambda^{s} \|u_\lambda (t)\|$ in  \eqref{mamao}, we get
\[
\begin{split}
\sum_{\lambda}\lambda^{2s}\|u_\lambda (t)\|\sum_{\mu\geq \lambda/8} \|u_\mu (t)\| &\leq c \left\{\sum_{\lambda}\lambda^{2s} \|u_\lambda \|_{L^{\infty}_{T}L^{2}}^2\right\}^{1/2}\left\{\sum_{\lambda}\lambda^{2s} \|u_\lambda\|^2 \right\}^{1/2}\\
&\leq  c \|J^{s}u\|_{L^{\infty}_{T} L^2}^2.
\end{split}
\]
Thus,
\begin{equation}\label{away}
J_{22}\leq  c\|u_x\|_{L^{1}_{T}L^{\infty}}\|J^{s}u\|_{L^{\infty}_{T}L^2}^2.
\end{equation}
From \eqref{norah} and \eqref{away} it then inferred that
\begin{equation}\label{dont}
J_2 \leq c\|u_x\|_{L^{1}_{T}L^{\infty}}\|J^{s}u\|_{L^{\infty}_{T}L^2}^2.
\end{equation}
Collecting \eqref{xuxu}--\eqref{gerar2} and \eqref{dont} one sees that the proof of the lemma is completed.
\end{proof}

The next lemma will be used in the proof of Proposition \ref{principal}.

\begin{lemma}\label{lemateo}
Assume  $\sigma>1$. Let $(p,q)$ be an admissible pair. Then for any suitable function $f$,
\begin{equation}
\|J^{\sigma}f\|_{L^{p}_{T}L^{q}}\leq c \left\{\sum_{\lambda}\|J^{\sigma}f_\lambda\|^{2}_{L^{p}_{T}L^{q}}\right\}^{1/2}\leq c \left\{\sum_{\lambda}\lambda^{2\sigma}\|f_\lambda\|^{2}_{L^{p}_{T}L^{q}}\right\}^{1/2}.
\end{equation}
\end{lemma}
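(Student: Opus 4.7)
The plan is to prove the two inequalities separately: the first via the classical Littlewood--Paley square function estimate combined with Minkowski's inequality, and the second via a Bernstein-type bound exploiting the frequency localization of the pieces $f_\lambda$. Throughout, I use that the admissibility relation $\frac{1}{q}+\frac{4}{3p}=\frac{1}{2}$ together with $p>8/3$ forces $p>2$ and $q\in(2,\infty)$, so in particular $p/2,q/2\geq 1$.

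For the first inequality, I observe that $J^\sigma$ is a Fourier multiplier and hence commutes with every Littlewood--Paley projector, so $\Delta_\lambda(J^\sigma f)=J^\sigma f_\lambda$. The classical Littlewood--Paley square function theorem on $L^q(\R^2)$, valid for $1<q<\infty$, applied pointwise in $t$ gives
$$
\|J^\sigma f(t,\cdot)\|_{L^q_{xy}}\leq c\left\|\left(\sum_\lambda |J^\sigma f_\lambda(t,\cdot)|^2\right)^{1/2}\right\|_{L^q_{xy}}.
$$
Taking the $L^p_T$-norm of both sides and rewriting the right-hand side as $\big\|\sum_\lambda |J^\sigma f_\lambda|^2\big\|_{L^{p/2}_T L^{q/2}}^{1/2}$, I apply Minkowski's inequality at the mixed-norm level (valid because $p/2,q/2\geq 1$) to interchange the sum in $\lambda$ with the $L^{p/2}_T L^{q/2}$ norm, obtaining
$$
\|J^\sigma f\|_{L^p_T L^q}\leq c\left(\sum_\lambda\big\||J^\sigma f_\lambda|^2\big\|_{L^{p/2}_T L^{q/2}}\right)^{1/2}=c\left(\sum_\lambda\|J^\sigma f_\lambda\|^2_{L^p_T L^q}\right)^{1/2}.
$$

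For the second inequality, the idea is to transfer the multiplier $\langle\xi,\eta\rangle^\sigma$ off $f_\lambda$ at the cost of $\lambda^\sigma$. Let $\tilde\varphi\in C_0^\infty(\R^2)$ be a bump function equal to $1$ on $\mathrm{supp}\,\varphi$ (and, for the case $\lambda=1$, on $\mathrm{supp}\,\chi$). Since $\hat{f}_\lambda(t,\xi,\eta)=\tilde\varphi(\xi/\lambda,\eta/\lambda)\hat{f}_\lambda(t,\xi,\eta)$, I can write $J^\sigma f_\lambda=K_\lambda\ast_{xy} f_\lambda$, where $K_\lambda$ is the inverse Fourier transform of
$$
m_\lambda(\xi,\eta):=(1+|\xi|^2+|\eta|^2)^{\sigma/2}\tilde\varphi(\xi/\lambda,\eta/\lambda).
$$
A direct rescaling $(\xi,\eta)\mapsto(\lambda\xi,\lambda\eta)$ shows that $\lambda^{-\sigma}m_\lambda(\lambda\cdot,\lambda\cdot)$ is a smooth, compactly supported function whose pointwise values and derivatives are bounded uniformly for dyadic $\lambda\geq 1$; consequently $\|K_\lambda\|_{L^1_{xy}}\leq c\lambda^\sigma$ uniformly in $\lambda$. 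Young's convolution inequality in $(x,y)$ then yields $\|J^\sigma f_\lambda\|_{L^p_T L^q}\leq c\lambda^\sigma\|f_\lambda\|_{L^p_T L^q}$, from which the second inequality is obtained by squaring and summing.

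The only mildly delicate point is verifying that admissibility implies $p/2,q/2\geq 1$ so that Minkowski's inequality applies at the mixed-norm level; this is immediate from $p>8/3$ and $q>2$. The hypothesis $\sigma>1$ plays no role in this lemma itself---it is stated for consistency with the context in which the lemma is invoked in Proposition~\ref{principal}, where it is combined with the Sobolev embedding $H^\sigma(\R^2)\hookrightarrow L^\infty(\R^2)$.
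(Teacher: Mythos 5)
Your proof is correct and follows essentially the same route as the paper, whose proof is a one-line citation: the Littlewood--Paley square function theorem (your first step, with the Minkowski interchange made explicit at the mixed-norm level using $p,q\geq 2$) together with the Mihlin--H\"ormander multiplier theorem, which in the paper supplies the uniform bound $\|J^{\sigma}f_{\lambda}\|_{L^{q}}\leq c\lambda^{\sigma}\|f_{\lambda}\|_{L^{q}}$ that you instead derive by hand via the rescaled kernel $K_{\lambda}$ and Young's inequality. Your observations that admissibility forces $2<q<\infty$ and $p>2$, and that $\sigma>1$ is not actually used in the lemma, are both accurate.
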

\begin{proof}
Since $(p,q)$ is an admissible pair, we have $p,q\geq 2.$ Thus, the result is a consequence of the well-known Littlewood-Paley theorem combined with the Mihlin-H\"ormander theorem.
\end{proof}

The next proposition presents the main ingredient in order to prove Theorem \ref{baixaregularidade}.

\begin{proposition}\label{principal}
Let  $\sigma>1$ and $T>0$. Let $u$ be a smooth solution  of \eqref{bozk}. If $(p,q)$ is an admissible pair then
\begin{equation}\label{lemaregular}
\|J^{\sigma}u\|_{L^{p}_{T}L^{q}}\leq c (1+T)^{1/p}\Big(1+\|J^{\sigma}u\|_{L^{\infty}_{T}L^2}\Big)\Big(1+\|\nabla u\|^{2}_{L^{1}_{T}L^{\infty}}\Big)\|J^{\sigma+1/p}u\|_{L^{\infty}_{T}L^2}.
\end{equation}
\end{proposition}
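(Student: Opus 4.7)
The strategy is a direct chaining of the three preceding lemmas (Lemma \ref{lemaadmissivel}, Lemma \ref{lemap}, and Lemma \ref{lemateo}), which were precisely designed to yield this inequality. The estimate will reassemble the frequency-localized bounds via a square function.

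First I would apply Lemma \ref{lemateo} to $f=u$ with the admissible pair $(p,q)$, which gives
\begin{equation*}
\|J^{\sigma}u\|_{L^{p}_{T}L^{q}}\leq c\left\{\sum_{\lambda}\lambda^{2\sigma}\|u_\lambda\|^{2}_{L^{p}_{T}L^{q}}\right\}^{1/2}.
\end{equation*}
Next, since $u$ is a smooth solution of \eqref{bozk}, I would invoke Lemma \ref{lemaadmissivel} to control the right-hand side by the $L^{\infty}_{T}L^{2}$-based square function with a weight $\lambda^{2\sigma+2/p}$, obtaining
\begin{equation*}
\|J^{\sigma}u\|_{L^{p}_{T}L^{q}}\leq c(1+T)^{1/p}\bigl(1+\|J^{\sigma}u\|_{L^{\infty}_{T}L^{2}}\bigr)\bigl(1+\|\nabla u\|_{L^{1}_{T}L^{\infty}}\bigr)\left\{\sum_{\lambda}\lambda^{2\sigma+2/p}\|u_\lambda\|^{2}_{L^{\infty}_{T}L^{2}}\right\}^{1/2}.
\end{equation*}
Finally, I would apply Lemma \ref{lemap} to bound the remaining square function by $(1+\|\nabla u\|_{L^{1}_{T}L^{\infty}})\|J^{\sigma+1/p}u\|_{L^{\infty}_{T}L^{2}}$.

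Combining the three estimates produces the factor $(1+\|\nabla u\|_{L^{1}_{T}L^{\infty}})^{2}$, which can be absorbed into $c(1+\|\nabla u\|^{2}_{L^{1}_{T}L^{\infty}})$ via the elementary inequality $(1+x)^{2}\leq 4(1+x^{2})$ for $x\geq 0$. This yields precisely \eqref{lemaregular}. There is no real obstacle at this stage, as all of the analytical content (the Strichartz-type estimate combined with the energy argument on dyadic blocks and the commutator bound) has already been packaged into Lemmas \ref{lemaadmissivel}--\ref{lemateo}; the only care needed is to track the dependence on $T$ and on the nonlinear norms so that the constants line up as stated.
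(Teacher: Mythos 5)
Your proposal is correct and follows exactly the paper's proof, which states that the result is an easy combination of Lemmas \ref{lemaadmissivel}, \ref{lemap}, and \ref{lemateo} applied in the same order you describe. Your additional remark about absorbing $(1+\|\nabla u\|_{L^{1}_{T}L^{\infty}})^{2}$ into $c\bigl(1+\|\nabla u\|^{2}_{L^{1}_{T}L^{\infty}}\bigr)$ is the only detail the paper leaves implicit, and you handle it correctly.
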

\begin{proof}
The proof follows as easy  combination of Lemmas \ref{lemaadmissivel}, \ref{lemap}, and \ref{lemateo}.
\end{proof}

The next two lemmas will be useful in the proof of the continuous dependence stated in Theorem \ref{baixaregularidade}.

\begin{lemma}\label{lema6}
Let $T>0$ be fixed.  Let $u_\lambda$ be defined as before. Assume $1<\delta <\kappa$ and suppose that the dyadic sequence $(\omega_\lambda)$ of positive numbers satisfies
$$
\delta \omega_\lambda \leq \omega_{2\lambda}\leq \kappa \omega_{\lambda},
$$
for all dyadic integers $\lambda$. Then for all $0\leq \tau, t, \leq T$
$$
\sum_{\lambda} \omega_{\lambda}^{2}\|u_{\lambda}(t)\|^2 \leq \exp\Big(c\|u_x\|_{L^{1}_{I}L^{\infty}}\Big)\sum_{\lambda}\omega_{\lambda}^{2}\|u_{\lambda}(\tau)\|^{2},
$$
where $I$ denotes either the interval $[\tau,t]$ or $[t,\tau].$
\end{lemma}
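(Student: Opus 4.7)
The plan is to adapt the energy argument of Lemma \ref{lemap} (and inequality \eqref{xuxu}) to a general weight sequence, and then close with Gronwall. Starting from equation \eqref{monica}, take the $L^2$ inner product with $u_\lambda$: the skew-adjoint pieces $\mathcal H \partial_x^2 u_\lambda$ and $\partial_x\partial_y^2 u_\lambda$ vanish, while the transport term yields $(u\partial_x u_\lambda,u_\lambda)=-\tfrac12\int u_x u_\lambda^2$. Integrating from $\tau$ to $t$ and arguing identically to \eqref{xuxu}, one obtains
\begin{equation*}
\|u_\lambda(t)\|^2=\|u_\lambda(\tau)\|^2+\mathrm{Re}\!\int_\tau^t\!\int_{\R^2}\! u_x u_\lambda^2\,dx\,dy\,dt'-2\mathrm{Re}\!\int_\tau^t\!([\Delta_\lambda,u\partial_x]u,u_\lambda)\,dt'.
\end{equation*}
Multiplying by $\omega_\lambda^2$ and summing over $\lambda$, the first integral contributes at most $\int_\tau^t\|u_x(t')\|_{L^\infty}\,\Phi(t')\,dt'$, where $\Phi(t):=\sum_\lambda\omega_\lambda^2\|u_\lambda(t)\|^2$.

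The core of the argument is to bound the commutator sum $\sum_\lambda\omega_\lambda^2\|u_\lambda\|\,\|[\Delta_\lambda,u\partial_x]u\|$ by $c\|u_x\|_{L^\infty}\,\Phi$, pointwise in time. Split the commutator as in \eqref{paris}. For the diagonal piece $[\Delta_\lambda,u\partial_x]\tilde\Delta_\lambda u$, Lemma \ref{commutator} gives the bound $c\|\nabla u\|_{L^\infty}\|\tilde\Delta_\lambda u\|$, and the hypothesis $\delta\omega_\lambda\le\omega_{2\lambda}\le\kappa\omega_\lambda$ implies $\omega_{\lambda/2}^2\le\delta^{-2}\omega_\lambda^2$ and $\omega_{2\lambda}^2\le\kappa^2\omega_\lambda^2$, so that the telescoping of $\tilde\Delta_\lambda$ into $u_{\lambda/2}$, $u_\lambda$, $u_{2\lambda}$ is harmless; Cauchy--Schwarz then yields
\begin{equation*}
\sum_\lambda\omega_\lambda^2\|u_\lambda\|\,\|[\Delta_\lambda,u\partial_x]\tilde\Delta_\lambda u\|\le c\|\nabla u\|_{L^\infty}\,\Phi.
\end{equation*}
For the off-diagonal piece $\Delta_\lambda(u\partial_x(1-\tilde\Delta_\lambda)u)$, as in the derivation of \eqref{friends}, only the frequencies $\mu\ge\lambda/8$ of $u$ contribute, giving the bound $c\|u_x\|_{L^\infty}\sum_{\mu\ge\lambda/8}\|u_\mu\|$. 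Weighting by $\omega_\lambda^2\|u_\lambda\|$ and summing, a duality/rearrangement argument in the spirit of \eqref{mamao}, with $\omega_\lambda$ replacing $\lambda^s$, produces
\begin{equation*}
\sum_\lambda\omega_\lambda^2\|u_\lambda\|\sum_{\mu\ge\lambda/8}\|u_\mu\|\le c\,\Phi.
\end{equation*}
Here the geometric factor $\delta^{-j}$ (from $\omega_{2^{-j}\lambda}\le\delta^{-j}\omega_\lambda$) replaces the factor $2^{-sj}$ and is summable precisely because $\delta>1$ — this is the only place where the doubling hypothesis is used in an essential way.

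Combining these pieces gives the integral inequality
\begin{equation*}
\Phi(t)\le\Phi(\tau)+c\int_\tau^t\|u_x(t')\|_{L^\infty}\Phi(t')\,dt',
\end{equation*}
so that Gronwall's inequality delivers $\Phi(t)\le\exp\bigl(c\|u_x\|_{L^1_I L^\infty}\bigr)\Phi(\tau)$, which is the desired estimate. The main obstacle is the second commutator estimate: one has to check that the doubling/halving condition on $(\omega_\lambda)$ is strong enough to reproduce, with $\omega_\lambda$ in place of $\lambda^s$, the decisive summation carried out in \eqref{mamao}. Everything else is a routine translation of the Littlewood--Paley energy estimate from fixed Sobolev weights $\lambda^s$ to the general weight $\omega_\lambda$.
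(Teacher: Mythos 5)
Your proposal is correct and takes essentially the same route as the paper's proof: the energy identity \eqref{xuxu} integrated between $\tau$ and $t$, the splitting \eqref{paris} with Lemma \ref{commutator} for the diagonal part, a geometric-summation/duality bound for the off-diagonal part in which $\omega_\mu^{-1}\omega_\lambda\leq \kappa^{-j}$ for $-3\leq j\leq 0$ and $\omega_\mu^{-1}\omega_\lambda\leq\delta^{-j}$ for $j\geq 1$ (the paper's \eqref{jocoso}, the analogue of \eqref{mamao} with $\omega_\lambda$ in place of $\lambda^{s}$, summable precisely because $\delta>1$), and finally Gronwall. The only blemish --- bounding the diagonal commutator by $\|\nabla u\|_{L^{\infty}}$ while writing the resulting integral inequality with $\|u_x\|_{L^{\infty}}$ alone --- is present in the paper's own \eqref{des} as well and is harmless where the lemma is applied.
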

\begin{proof}
By using \eqref{paris}-\eqref{xuxu} and taking into account that $\delta \omega_{\lambda}\leq \omega_{2\lambda}\leq \kappa \omega_{\lambda},$
 we have
\begin{equation}\label{des}
\begin{split}
\sum_{\lambda}\omega_{\lambda}^2 \|u_\lambda(t)\|^2\leq & \sum_{\lambda} \omega_{\lambda}^2 \|u_\lambda(\tau)\|^2 +c\int_{\tau}^{t}\|u_x(r)\|_{L^{\infty}}\sum_{\lambda}\omega_{\lambda}^2 \|u_\lambda (r)\|^2 dr \\
&+ c \int_{\tau}^{t}\|u_x (r)\|_{L^{\infty}}\left(\sum_{\lambda}\omega_{\lambda}^2 \|u_\lambda (r)\| \sum_{\mu\geq \lambda/8}\|u_\mu (r)\|\right)dr.
\end{split}
\end{equation}
If $\mu\geq \lambda/8$ then we can write $\mu=2^{j}\lambda,$ with $j\geq -3.$
For one hand, from $\omega_{2\lambda} \leq \kappa \omega_{\lambda}$, we obtain
\begin{equation}\label{desanterior1}
\omega_{\mu}^{-1}\omega_\lambda \leq \kappa^{-j}, \ -3\leq j\leq 0.
\end{equation}
On the other hand,  from $\delta \omega_{\lambda}\leq \omega_{2\lambda}$, we obtain
\begin{equation}\label{desanterior2}
\omega_{\mu}^{-1}\omega_{\lambda}\leq \delta^{-j}, \ j\geq 1.
\end{equation}
By setting $d_\lambda = \omega_{\lambda}\|u_\lambda (r)\|,$ $\mu=2^{j}\lambda,$ $j\geq -3$ and using inequalities \eqref{desanterior1} and \eqref{desanterior2},  we deduce
\begin{equation}\label{jocoso}
\begin{split}
\sum_{\lambda}\sum_{\mu\geq \lambda/8}\omega_{\lambda}\|u_\mu (r)\|d_{\lambda}=& \sum_{\mu \geq \lambda/8}\sum_{\lambda}\omega_{\mu}^{-1}\omega_{\lambda}\omega_{2^{j}\lambda}\|u_{2^{j}\lambda}(r)\|d_{\lambda}\\
\leq & \ \sum_{j\geq -3} \left\{\sum_{\lambda}\omega_{\mu}^{-2}\omega_{\lambda}^2\omega_{2^{j}\lambda}^{2}\|u_{2^{j}\lambda}(r)\|^2\right\}^{1/2}\left\{\sum_{\lambda}d_{\lambda}^2\right\}^{1/2}\\
\leq & c_{\kappa,\delta}\sum_{\lambda}\omega_{\lambda}^{2}\|u_{\lambda}(r)\|^2.
\end{split}
\end{equation}
Thus, from \eqref{des} and \eqref{jocoso}, we obtain
\begin{equation*}
\sum_{\lambda} \omega_{\lambda}^2 \|u_\lambda (t)\|^2 \leq \sum_{\lambda}\omega_{\lambda}^2 \|u_\lambda (\tau)\|^2 +c\int_{\tau}^{t}\|u_x(r)\|_{L^{\infty}}\sum _{\lambda}\omega_{\lambda}^2 \|u_{\lambda}(r)\|^2 dr.
\end{equation*}
An application of Gronwall's lemma now gives the result.
\end{proof}

\begin{lemma}\label{lemaomega}
Let $(v^n)$ be a sequence in $H^s(\R^2)$.
Suppose that $v^{n}\to v$, in $H^{s}(\R^2).$ Then there exists a sequence $(\omega_\lambda)$ of positive numbers satisfying
$$2^s \omega_\lambda \leq \omega_{2\lambda}\leq 2^{s+1}\omega_{\lambda},$$ and
$$\frac{\omega_\lambda}{\lambda^s}\to \infty,$$
such that
$$\sup_{n}\sum_{\lambda}\omega_{\lambda}^{2}\|v^{n}_{\lambda}\|^{2}<\infty,$$
where, as before $v^{n}_{\lambda}=\Delta_{\lambda}v^{n}.$
\end{lemma}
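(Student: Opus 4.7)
The plan is to define $\omega_\lambda = \lambda^s \sqrt{a_\lambda}$ for an auxiliary dyadic sequence $(a_\lambda)$ of positive numbers satisfying $1 \leq a_{2\lambda}/a_\lambda \leq 4$ with $a_\lambda \to \infty$, chosen so that $\sum_\lambda \lambda^{2s} a_\lambda \|v^n_\lambda\|^2$ is bounded uniformly in $n$. The condition $2^s \omega_\lambda \leq \omega_{2\lambda} \leq 2^{s+1}\omega_\lambda$ is then equivalent to the stated bounds on $a_{2\lambda}/a_\lambda$, and the divergence $\omega_\lambda/\lambda^s = \sqrt{a_\lambda} \to \infty$ follows directly from $a_\lambda\to\infty$. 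So the only substantive task is to produce $(a_\lambda)$ that grows to infinity while keeping the weighted tails summable uniformly in $n$.

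The first step is uniform tail control. Setting $c^n_\lambda := \lambda^{2s}\|v^n_\lambda\|^2$, the Littlewood--Paley characterization of $H^s$ gives $\sum_\lambda c^n_\lambda \simeq \|v^n\|_{H^s}^2$, bounded uniformly in $n$. The claim I would establish is that for every $\epsilon>0$ there is a dyadic integer $\Lambda(\epsilon)$ with
\begin{equation*}
\sup_n \sum_{\lambda \geq \Lambda(\epsilon)} c^n_\lambda < \epsilon.
\end{equation*}
To see this, use the inequality $c^n_\lambda \leq 2\lambda^{2s}\|v_\lambda\|^2 + 2\lambda^{2s}\|v^n_\lambda - v_\lambda\|^2$: for $\Lambda$ large, the tail of the first term in $\lambda$ is $< \epsilon/4$; then pick $N$ so that $\|v^n - v\|_{H^s}^2 < \epsilon/8$ for $n \geq N$; for the finitely many $n < N$, enlarge $\Lambda$ to absorb their tails as well. (This is a standard equi-integrability argument, the only real subtlety in the lemma.)

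The second step is the construction. Choose a strictly increasing sequence of dyadic integers $\Lambda_0 = 1 < \Lambda_1 < \Lambda_2 < \cdots$ with $\Lambda_{k+1} \geq 2\Lambda_k$ and
\begin{equation*}
\sup_n \sum_{\lambda \geq \Lambda_k} c^n_\lambda < 4^{-k},
\end{equation*}
and define $a_\lambda := 2^k$ for $\Lambda_k \leq \lambda < \Lambda_{k+1}$, together with $\omega_\lambda := \lambda^s \sqrt{a_\lambda}$.

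The last step is the verification. Since each dyadic doubling moves $\lambda$ at most into the next block, $a_{2\lambda}/a_\lambda \in\{1,2\}\subset [1,4]$, giving $2^s\omega_\lambda \leq \omega_{2\lambda} \leq 2^{s+1}\omega_\lambda$; and $a_\lambda\to\infty$ yields $\omega_\lambda/\lambda^s\to\infty$. For the weighted sum,
\begin{equation*}
\sum_\lambda \omega_\lambda^2 \|v^n_\lambda\|^2 = \sum_k 2^k \sum_{\Lambda_k \leq \lambda < \Lambda_{k+1}} c^n_\lambda \leq \sum_k 2^k \cdot 4^{-k} = 2,
\end{equation*}
uniformly in $n$, completing the proof. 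The main obstacle, as indicated, is the uniform tail control in the first step; once that is secured, the geometric choice $\epsilon_k = 4^{-k}$ against a weight $2^k$ is designed precisely so that the series telescopes into a convergent geometric one.
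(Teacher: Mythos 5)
Your proposal is correct and is essentially the argument the paper intends: the paper omits the proof, citing Lemma 4.1 of Koch--Tzvetkov \cite{KT}, whose proof is exactly your scheme of uniform tail control (the equi-integrability step, using boundedness plus convergence in $H^s$) followed by block-constant weights $a_\lambda=2^k$ on dyadic blocks $[\Lambda_k,\Lambda_{k+1})$ with $\Lambda_{k+1}\geq 2\Lambda_k$ ensuring $a_{2\lambda}/a_\lambda\in\{1,2\}$. The only cosmetic slip is at $k=0$: with $\Lambda_0=1$ the bound $\sup_n\sum_{\lambda\geq \Lambda_0}c^n_\lambda<4^0$ need not hold, so the first block should instead be estimated by the uniform bound $\sup_n\|v^n\|_{H^s}^2\leq M$, giving $\sum_\lambda \omega_\lambda^2\|v^n_\lambda\|^2\leq M+\sum_{k\geq 1}2^k4^{-k}<\infty$, which affects nothing else.
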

\begin{proof}
The proof is very similar to that of Lemma 4.1 in  \cite{KT}; so we omit the details.
\end{proof}

With the results of this section in hand, we can prove Theorem \ref{baixaregularidade}. This will be done in the next section.

\section{Proof of Theorem \ref{baixaregularidade}} \label{proofofTh1}

Our goal in this section is to prove Theorem \ref{baixaregularidade}. We divide the section into three parts. In the first one we prove uniqueness, which essentially follows from the fact that the solution belongs to $ L^{1}([0,T]; L^{\infty})$ combined with Gronwall's lemma. In the second one we show the existence of solutions, by deriving a suitable estimate in order to perform a compactness argument. Finally, in the third part, we prove the continuous dependence. The method we use here is the one putforward in \cite{KT}.

\subsection{Uniqueness}

Let $u$ and $v$ be solutions of \textrm{IVP} \eqref{bozk}. Setting $w:=u-v$, subtracting the two equations satisfied by $u$ and $v$, and taking the inner product in $L^2(\R^2)$ with $w$, we obtain
$$\frac{1}{2}\frac{d}{dt}\|w(t)\|^{2} +(\mathcal{H}\partial_{x}^2 w,w)+(w_{xyy},w)+(wu_x ,w)+(vw_x,w)=0.$$
Integrating by parts the last two terms and using the  antisymmetry of the operators $\mathcal{H} \partial_{x}^2$ and $\partial_{xyy}^3$, we have
$$\frac{1}{2}\frac{d}{dt}\|w(t)\|^2\leq (\|u_x\|_{L^{1}_{T}L^{\infty}}+\|v_x\|_{L^{1}_{T}L^{\infty}})\|w(t)\|^2,$$
An application of Gronwall's Lemma gives
\begin{equation}\label{unicidade}
\|u(t)-v(t)\| \leq \|u(0)-v(0)\|\exp [c(\|u_x\|_{L^{1}_{T}L^{\infty}}+\|v_x\|_{L^{1}_{T}L^{\infty}})].
\end{equation}
The uniqueness is now a consequence of \eqref{unicidade}.

\subsection{Existence}\label{exissub}

The proof will be divided in several lemmas. First of all we show that the problem of existence of solutions in an interval $[0,T]$, for an arbitrary initial data, can be reduced to show the existence in $[0,1]$ for initial data with small norm in $H^{s}(\R^2).$

\begin{lemma}\label{dadopequeno}
Let $s>0$. Suppose that there exists a small positive constant  $\gamma$ for which we can find a solution  of \eqref{bozk} defined in $[0,1]$ with initial data satisfying $\|\phi\|_{H^s}\leq \gamma.$ Then, for any $\phi \in H^{s}(\R^2),$  we can find a solution of  \eqref{bozk} defined in an interval $[0,T]$, with $T\geq c\|\phi\|_{H^s}^{-8}.$
\end{lemma}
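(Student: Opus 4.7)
The proof is a standard scaling argument, exploiting the scaling invariance of the BO-ZK equation to reduce the problem with arbitrary initial data to the small-data case assumed in the hypothesis.

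\textbf{Step 1: Scaling symmetry.} A direct computation shows that if $u=u(t,x,y)$ solves the BO-ZK equation in \eqref{bozk}, then for any $\lambda>0$ the rescaled function
\[
u_{\lambda}(t,x,y):=\lambda\, u(\lambda^{2}t,\lambda x,\sqrt{\lambda}\,y)
\]
is also a solution. Indeed, each of the terms $u_{t}$, $\mathcal{H}u_{xx}$, $u_{xyy}$, and $uu_{x}$ picks up the same overall factor $\lambda^{3}$ under this rescaling, as can be checked using that the Hilbert transform $\mathcal{H}$ in the $x$ variable commutes with the dilation in $x$ (up to a sign, which is positive when $\lambda>0$). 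Consequently, if $\phi\in H^{s}(\R^{2})$, define the rescaled initial datum
\[
\phi_{\lambda}(x,y):=\lambda\,\phi(\lambda x,\sqrt{\lambda}\,y).
\]

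\textbf{Step 2: Smallness of $\|\phi_{\lambda}\|_{H^{s}}$ for small $\lambda$.} Computing the Fourier transform yields $\widehat{\phi_{\lambda}}(\xi,\eta)=\lambda^{-1/2}\,\widehat{\phi}(\xi/\lambda,\eta/\sqrt{\lambda})$, so after the change of variables $\xi\mapsto\lambda\xi$, $\eta\mapsto\sqrt{\lambda}\,\eta$,
\[
\|\phi_{\lambda}\|_{H^{s}}^{2}=\lambda^{1/2}\!\int_{\R^{2}}(1+\lambda^{2}\xi^{2}+\lambda\eta^{2})^{s}|\widehat{\phi}(\xi,\eta)|^{2}\,d\xi d\eta.
\]
For $0<\lambda\leq 1$ and $s\geq 0$ one has $1+\lambda^{2}\xi^{2}+\lambda\eta^{2}\leq 1+\xi^{2}+\eta^{2}$, hence
\[
\|\phi_{\lambda}\|_{H^{s}}\leq\lambda^{1/4}\|\phi\|_{H^{s}}.
\]

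\textbf{Step 3: Choice of $\lambda$ and conclusion.} Without loss of generality, assume $\|\phi\|_{H^{s}}\geq\gamma$ (otherwise the hypothesis applies directly with $\phi$ itself, giving $T=1\geq c\|\phi\|_{H^{s}}^{-8}$). Choose
\[
\lambda=\Bigl(\frac{\gamma}{\|\phi\|_{H^{s}}}\Bigr)^{4}\leq 1,
\]
so that $\|\phi_{\lambda}\|_{H^{s}}\leq\gamma$. By hypothesis, there exists a solution $v\in C([0,1];H^{s}(\R^{2}))$ of \eqref{bozk} with $v(0)=\phi_{\lambda}$. Setting
\[
u(t,x,y):=\lambda^{-1}\,v(t/\lambda^{2},x/\lambda,y/\sqrt{\lambda}),
\]
one checks that $u$ solves \eqref{bozk} with $u(0,x,y)=\phi(x,y)$, and $u$ is defined on the time interval $[0,\lambda^{2}]$. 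Thus we obtain existence on $[0,T]$ with
\[
T=\lambda^{2}=\Bigl(\frac{\gamma}{\|\phi\|_{H^{s}}}\Bigr)^{8}\geq c\,\|\phi\|_{H^{s}}^{-8},
\]
which completes the proof. The only delicate point is the $H^{s}$ estimate in Step 2; the anisotropy of the scaling (different powers of $\lambda$ in $x$ and $y$) produces different weights on the frequency variables, but for $\lambda\leq 1$ these are uniformly dominated by the original Japanese bracket, giving the clean bound $\lambda^{1/4}\|\phi\|_{H^{s}}$ used above.
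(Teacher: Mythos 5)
Your proof is correct and takes essentially the same route as the paper's: the identical anisotropic rescaling $\phi_\lambda(x,y)=\lambda\phi(\lambda x,\lambda^{1/2}y)$, the same Fourier-side computation yielding $\|\phi_\lambda\|_{H^s}\leq \lambda^{1/4}\|\phi\|_{H^s}$ for $0<\lambda\leq 1$, and the same undoing of the scaling to get existence on $[0,\lambda^2]$ with $\lambda^2\geq c\|\phi\|_{H^s}^{-8}$. (Your parenthetical treatment of the case $\|\phi\|_{H^s}<\gamma$ is no more precise than the paper's own proof, which likewise only produces the stated lower bound on $T$ in the regime $\|\phi\|_{H^s}\gtrsim\gamma$, so this is not a point of divergence.)
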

\begin{proof}
Given $\phi \in H^s,$  take $0<\lambda <1$ such that $\lambda^{1/4}\|\phi\|_{H^s}<\gamma$. If $\tilde{u}_{0}(x,y)=\lambda \phi(\lambda x, \lambda^{1/2}y)$, then
\begin{eqnarray*}
\|\tilde{u}_{0}\|_{H^s}^2&=& \frac{1}{\lambda}\int_{\R^2}(1+\xi^2 +\eta^2)^s \Big|\hat{\phi}(\frac{\xi}{\lambda},\frac{\eta}{\lambda^{1/2}})\Big|^2 d\xi d\eta \\
                         &=& \lambda^{1/2} \int_{\R^2} (1+\lambda^2 \xi^2 +\lambda \eta^2)^s |\hat{\phi} (\xi,\eta)|^2 d\xi d\eta \\
                         &\leq&\lambda^{1/2}\|\phi\|_{H^{s}}^2 < \gamma^2.
\end{eqnarray*}
Thus,  $\tilde{u}_{0}$ satisfies the smallness condition. Let $\tilde{u}(t,x,y)$ be the solution of \eqref{bozk} with initial data $\tilde{u}_0,$ where $\tilde{u}$ is defined in the interval $[0,1]$. Now, we may use the suitable scaling property enjoyed by BO-ZK equation. Indeed, define $u(t,x,y):=\lambda^{-1}\tilde{u}(\lambda^{-2}t,\lambda^{-1}x,\lambda^{-1/2}y)$. It is easy to see that $u$ satisfies \eqref{bozk} for $0\leq t\leq \lambda^2 < \frac{\gamma^8}{\|\phi\|_{H^s}^{8}}.$ Therefore $u$ is a solution of BO-ZK in the interval $[0,T]$, with $T\geq c \|\phi\|_{H^s}^{-8}.$ The proof is thus completed.
\end{proof}

\begin{lemma}\label{lemkatoponce}
Let $u$ be a smooth solution of \eqref{bozk} defined in the interval $[0,T]$. Then
\begin{equation}\label{comutado}
\|J^{s}u\|_{L^{\infty}_{T}L^2} \leq \|u(0)\|_{H^{s}}\exp\left(c\int_{0}^{T}\|u_{x}(t)\|_{L^{\infty}}dt\right).
\end{equation}
\end{lemma}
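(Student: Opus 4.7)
The plan is the classical Kato--Ponce energy estimate applied directly to \eqref{bozk}. I would apply $J^s$ to the equation
$$u_t+\mathcal{H}\partial_x^2 u+\partial_x\partial_y^2 u+uu_x=0,$$
take the $L^2$-inner product with $J^s u$, and use that $\frac{d}{dt}\|J^s u\|^2=2(J^s u_t,J^s u)$. The linear dispersive contributions $(J^s\mathcal{H}\partial_x^2 u,J^s u)$ and $(J^s\partial_x\partial_y^2 u,J^s u)$ vanish because $J^s$ is a Fourier multiplier and the symbols $i\xi|\xi|$ and $i\xi\eta^2$ are purely imaginary (equivalently, the operators $\mathcal{H}\partial_x^2$ and $\partial_x\partial_y^2$ are antisymmetric on $L^2(\R^2)$).

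The remaining work is to control the nonlinear contribution $(J^s(uu_x),J^s u)$. I would split it via the standard commutator decomposition
$$(J^s(uu_x),J^s u)=(uJ^s u_x,J^s u)+([J^s,u]u_x,J^s u).$$
For the first piece, an integration by parts in $x$ gives
$$(uJ^s u_x,J^s u)=\tfrac{1}{2}\int u\,\partial_x\bigl((J^s u)^2\bigr)\,dxdy=-\tfrac{1}{2}\int u_x(J^s u)^2\,dxdy,$$
which is bounded by $c\|u_x\|_{L^\infty}\|J^s u\|^2$. For the commutator, I would invoke the Kato--Ponce--type commutator estimate, which in its version adapted to this nonlinearity yields
$$\|[J^s,u]u_x\|_{L^2}\leq c\|u_x\|_{L^\infty}\|J^s u\|_{L^2},$$
so that $|([J^s,u]u_x,J^s u)|\leq c\|u_x\|_{L^\infty}\|J^s u\|^2$ as well.

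Putting these two bounds together gives the differential inequality
$$\frac{d}{dt}\|J^s u(t)\|^2\leq c\|u_x(t)\|_{L^\infty}\|J^s u(t)\|^2,$$
and an application of Gronwall's inequality on $[0,T]$ produces
$$\|J^s u(t)\|^2\leq \|u(0)\|_{H^s}^2\exp\Bigl(c\int_0^T\|u_x(\tau)\|_{L^\infty}d\tau\Bigr),$$
from which \eqref{comutado} follows by taking the supremum in $t$ and then a square root.

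The main obstacle is justifying the commutator estimate with $\|u_x\|_{L^\infty}$ (rather than $\|\nabla u\|_{L^\infty}$) on the right-hand side; this is the step that must be handled carefully in two dimensions, since a naive application of the generic Kato--Ponce commutator bound produces a $\|\nabla u\|_{L^\infty}$ factor involving both $u_x$ and $u_y$. The form of the nonlinearity $uu_x=\tfrac{1}{2}\partial_x(u^2)$, together with a paraproduct decomposition (or equivalently a Littlewood--Paley splitting into regions where $u$ is low frequency and $u_x$ is high frequency, and vice versa), should allow one to extract only the $x$-derivative of $u$ in the low-frequency factor, giving the sharper bound in terms of $\|u_x\|_{L^\infty}$ alone. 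Every other step is a routine integration by parts or Gronwall argument.
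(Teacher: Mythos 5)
Your strategy coincides with the paper's own proof, which is exactly ``apply $J^s$, integrate by parts, Kato--Ponce commutator estimate, Gronwall,'' and every step of your argument is sound except the one you yourself flag as the main obstacle. Unfortunately that obstacle is fatal to the estimate in the form you claim it: the commutator bound $\|[J^s,u]u_x\|\leq c\|u_x\|_{L^\infty}\|J^su\|$ is false in two dimensions, and no paraproduct splitting will produce it. The reason is directional: in a low-high interaction, the derivative that the commutator transfers onto the low-frequency copy of $u$ points in the direction of the \emph{high frequency}, not in the $x$-direction; the leading term of $[J^s,u]u_x$ is a multiple of $\nabla u\cdot\nabla J^{s-2}u_x$, so if the high-frequency part of $u$ oscillates obliquely, the factor $\partial_x$ costs nothing and the $u_y$-component survives at full strength. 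Concretely, take
\[
u=V(y)\,\chi(x/L)+a\cos\big(N(x+y)\big)\psi(x,y),
\]
with $V,\chi,\psi$ fixed smooth profiles chosen so that $\int V'\psi^2\,dxdy\neq 0$, and set $a=N^{1/4-s}$, $L=N^{1/4}$. Writing $v$ for the first (low-frequency) term and $w$ for the second, the pairing of $J^su$ with the piece $\nabla v\cdot\nabla J^{s-2}w_x$ of the commutator contains $\big(v_y\,\partial_y\partial_xJ^{s-2}w,J^sw\big)\sim a^2N^{2s}=N^{1/2}$, with all remaining contributions to $(J^s(uu_x),J^su)$ of smaller order, while $\|u_x\|_{L^\infty}\|J^su\|^2\lesssim C+N^{1/4}+N^{7/4-s}=o(N^{1/2})$ for $s>5/4$. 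Hence the differential inequality $\frac{d}{dt}\|J^su\|^2\leq c\|u_x\|_{L^\infty}\|J^su\|^2$ cannot hold with a universal constant, and no energy-method proof of \eqref{comutado} as literally written can succeed; this is the familiar fact that for two-dimensional models (ZK, KP) the energy estimate carries $\|\nabla u\|_{L^\infty}$.

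What your argument genuinely proves --- replacing your claimed bound by the true Kato--Ponce estimate $\|[J^s,u]u_x\|\leq c\big(\|\nabla u\|_{L^\infty}\|J^{s-1}u_x\|+\|u_x\|_{L^\infty}\|J^su\|\big)\leq c\|\nabla u\|_{L^\infty}\|J^su\|$ --- is \eqref{comutado} with $\|\nabla u(t)\|_{L^\infty}$ in place of $\|u_x(t)\|_{L^\infty}$, and you should state it that way. This weaker version is also all that the paper ever uses: the controlling quantity $F(T)$ in Lemma \ref{F} is built from the full $\|\nabla u\|_{L^1_TL^\infty}$ (both $\|u_x\|_{L^1_TL^\infty}$ and $\|u_y\|_{L^1_TL^\infty}$ are estimated there, see \eqref{dess3y}), so every invocation of \eqref{comutado}, including the one following \eqref{estimativaintegral} in Lemma \ref{lemmafuncd}, goes through verbatim with the $\nabla u$ version. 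In short, the $\|u_x\|$-only form of the lemma appears to be carried over too literally from the one-dimensional Benjamin--Ono setting, where $\nabla u=u_x$; do not spend effort trying to prove it as written.
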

\begin{proof}
This is well-known in the context of BO the equation. One  applies the operator $J^s$ to equation \eqref{bozk} and uses integration by parts.  Kato-Ponce's commutator estimates and  Gronwall's Lemma then yield the result (see \cite{KP} and \cite{Po}).
\end{proof}

Note that \eqref{comutado} states that a sufficient condition to $u$ be in $H^s$ if that $u_x\in L^1([0,T];L^\infty(\R^2))$. This will be our main concern in what  follows in this section.

\begin{lemma}\label{F}
Assume $s>11/8$ ant let $\sigma=s-3/8$. Let $u$ be a smooth solution of \eqref{bozk}. Define $F(T):= \|\nabla u\|_{L^{1}_{T}L^{\infty}}+\|J^{\sigma}u\|_{L^{\infty}_{T}L^{2}}, \ T\in [0,1]$. Then
there exists a constant $C>1$ such that
$$F(T)\leq C\|u(0)\|_{H^s}(1+F(T))^{3}\exp (c F(T)).$$
\end{lemma}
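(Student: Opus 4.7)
The plan is to estimate the two pieces of $F(T)$ separately and then recombine. The condition $s>11/8$ is equivalent to $\sigma=s-3/8>1$, which is exactly the regularity hypothesis of Proposition \ref{principal}, so that proposition is available throughout the argument.

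First, for the $L^\infty_T L^2$ piece, I would simply apply Lemma \ref{lemkatoponce} at regularity $s$ and use $\sigma<s$ to get
$$
\|J^{\sigma}u\|_{L^{\infty}_T L^2}\leq \|J^{s}u\|_{L^{\infty}_T L^2}\leq \|u(0)\|_{H^s}\exp\bigl(c\|u_x\|_{L^{1}_T L^{\infty}}\bigr)\leq \|u(0)\|_{H^s}\exp(cF(T)).
$$

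Second, for the $L^1_TL^\infty$ piece I would combine Proposition \ref{principal} with a two-dimensional Sobolev embedding. Fix an admissible pair $(p,q)$, i.e.\ $p>8/3$ and $\tfrac{1}{q}+\tfrac{4}{3p}=\tfrac{1}{2}$, with $p$ only slightly larger than $8/3$; since $q\to+\infty$ as $p\searrow 8/3$, for such $p$ one can ensure $2/q<\beta:=\sigma-1=s-11/8$, while also keeping $p\geq 8/3$ (which in turn gives $\sigma+1/p\leq s$). By the Sobolev embedding $W^{\beta,q}(\R^2)\hookrightarrow L^{\infty}(\R^2)$ combined with the $L^q$-boundedness of the Riesz-type multipliers with symbols $\xi/\langle(\xi,\eta)\rangle$ and $\eta/\langle(\xi,\eta)\rangle$,
$$
\|\nabla u(t)\|_{L^{\infty}_{xy}}\leq c\|J^{\beta+1}u(t)\|_{L^{q}_{xy}}=c\|J^{\sigma}u(t)\|_{L^{q}_{xy}}.
$$
Hölder in $t$, Proposition \ref{principal}, and Lemma \ref{lemkatoponce} (used once more to bound $\|J^{\sigma+1/p}u\|_{L^{\infty}_T L^2}\leq \|J^s u\|_{L^{\infty}_T L^2}\leq \|u(0)\|_{H^s}\exp(cF(T))$) then yield
$$
\|\nabla u\|_{L^{1}_T L^{\infty}}\leq cT^{1-1/p}(1+T)^{1/p}\bigl(1+\|J^{\sigma}u\|_{L^{\infty}_T L^2}\bigr)\bigl(1+\|\nabla u\|^{2}_{L^{1}_T L^{\infty}}\bigr)\|u(0)\|_{H^s}\exp(cF(T)).
$$

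Finally, for $T\in[0,1]$ the prefactor $T^{1-1/p}(1+T)^{1/p}$ is bounded by $2$, and the product $(1+\|J^{\sigma}u\|_{L^{\infty}_T L^2})(1+\|\nabla u\|^{2}_{L^{1}_T L^{\infty}})$ is bounded by $(1+F(T))^{3}$. Adding the two bounds produces the claimed inequality $F(T)\leq C\|u(0)\|_{H^s}(1+F(T))^{3}\exp(cF(T))$. The main delicate point—and the source of the threshold $s>11/8$—is the simultaneous requirement $2/q<\beta=s-11/8$ and $p>8/3$: the admissibility relation forces $q$ to stay bounded below by a quantity inversely proportional to $s-11/8$, so the argument degenerates exactly at $s=11/8$ and succeeds for every $s$ strictly above it, matching the hypothesis of the lemma.
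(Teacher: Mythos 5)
Your proposal is correct and is essentially identical to the paper's own proof: the same splitting of $F(T)$, the same choice of admissible pair with $p$ slightly above $8/3$ so that $2/q<\sigma-1$ and $1/p<3/8$ (whence $\sigma+1/p<s$), and the same chain of Proposition \ref{principal}, the Sobolev embedding $H^{\sigma-1,q}(\R^2)\hookrightarrow L^{\infty}(\R^2)$ with the Mihlin multiplier theorem, H\"older in time, and Lemma \ref{lemkatoponce}. Your closing remark about why the argument degenerates exactly at $s=11/8$ also matches the paper's implicit reasoning, so nothing needs to be changed.
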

\begin{proof}
Since $s>11/8$ we have $\sigma>1$. Thus, we can take an admissible pair, say, $(p,q)$, such that
$$
\sigma>1+\frac{2}{q}.
$$
Also, because $p>8/3$, we deduce that
  $$\sigma + \frac{1}{p}<\sigma +\frac{3}{8}=s.$$
  Then, using \eqref{lemaregular} and \eqref{comutado},
  \begin{equation}\label{des1}
  \begin{split}
  \|J^{\sigma}u\|_{L^{p}_{T}L^{q}}&\leq   c (1+T)^{1/p}\Big(1+\|J^{\sigma}u\|_{L^{\infty}_{T}L^2}\Big)\Big(1+\|\nabla u\|^{2}_{L^{1}_{T}L^{\infty}}\Big)\|J^{\sigma+1/p}u\|_{L^{\infty}_{T}L^2}\\
  &\leq  d(T,u)\|J^{s}u\|_{L^{\infty}_{T}L^2}\\
  &\leq  d(T,u)\|u(0)\|_{H^s}\exp(c\|u_x\|_{L^{1}_{T}L^{\infty}}),
  \end{split}
  \end{equation}
where $$d(T,u):=(1+T)^{1/p}\Big(1+\|J^{\sigma}u\|_{L^{\infty}_{T}L^2}\Big)\Big(1+\|\nabla u\|^{2}_{L^{1}_{T}L^{\infty}}\Big).$$

Now, in view of the Sobolev embedding $H^{\sigma-1,q}(\R^2)\hookrightarrow L^{\infty}(\R^2)$, where $H^{\sigma-1,q}(\R^2):=J^{-\sigma+1}L^{q}(\R^2)$,  and the Milhin multiplier  theorem (see e.g., \cite{BL}), we infer
\begin{eqnarray}\label{des2}
\|u_x\|_{L^{\infty}}\leq  c \|J^{\sigma-1}u_x\|_{L^{q}}\leq  c \|J^{\sigma}u\|_{L^{q}}.
\end{eqnarray}
Therefore, by H\"older's inequality,
\begin{eqnarray}\label{dess3}\nonumber
\|u_x\|_{L^{1}_{T}L^{\infty}}\leq  \int_{0}^{T}\|J^{\sigma}u\|_{L^{q}}dt
 \leq T^{1-\frac{1}{p}}\|J^{\sigma}u\|_{L^{p}_{T}L^{q}}.
\end{eqnarray}
Similarly,
\begin{eqnarray}\label{dess3y}
\|u_y\|_{L^{1}_{T}L^{\infty}}\leq T^{1-\frac{1}{p}}\|J^{\sigma}u\|_{L^{p}_{T}L^{q}}.
\end{eqnarray}

Since $\sigma <s$, we can write
\begin{equation}\label{dess4}
\begin{split}
\|J^{\sigma}u\|_{L^{\infty}_{T}L^2}&\leq  \|u(0)\|_{H^s}\exp \bigg(c\int_{0}^{T}\|u_x (t)\|_{L^{\infty}}dt \bigg)\\
&\leq  \|u(0)\|_{H^s}\exp \big (c \|u_x\|_{L^{1}_{T}L^{\infty}}\big)\\
&\leq  \|u(0)\|_{H^s}(1+F(T))^{3}\exp (c F(T)).
\end{split}
\end{equation}
So, from \eqref{des1}--\eqref{dess4},
\begin{equation}\label{des3}
\begin{split}
F(T)\leq&  2T^{1-\frac{1}{p}}\|J^{\sigma}u\|_{L^{p}_{T}L^{q}}+\|J^{\sigma}u\|_{L^{\infty}_{T}L^2}\\
    \leq&  2T^{1-\frac{1}{p}}(1+T)^{1/p}\Big(1+\|J^{\sigma}u\|_{L^{\infty}_{T}L^2}\Big)\Big(1+\|\nabla u\|^{2}_{L^{1}_{T}L^{\infty}}\Big)\\
    &\quad\times\|u(0)\|_{H^s}\exp(c\|u_x\|_{L^{1}_{T}L^{\infty}})+\|J^{\sigma}u\|_{L^{\infty}_{T}L^2}\\
    \leq&  c_{T}\Big(1+\|\nabla u\|_{L^{1}_{T}L^{\infty}}+\|J^{\sigma}u\|_{L^{\infty}_{T}L^2}\Big)\Big(1+\|\nabla u\|_{L^{1}_{T}L^{\infty}}+\|J^{\sigma}u\|_{L^{\infty}_{T}L^2}\Big)^{2}\\
    &\quad \times\|u(0)\|_{H^s}\exp(c\|u_x\|_{L^{1}_{T}L^{\infty}}) +\|J^{\sigma}u\|_{L^{\infty}_{T}L^{2}}\\
    \leq& C \|u(0)\|_{H^s}(1+F(T))^{3}\exp (c F(T)).
\end{split}
\end{equation}
Note that $C=c_{T}=2T^{1-\frac{1}{p}}(1+T)^{1/p}\leq 2^{1+\frac{1}{p}}.$ This completes the proof of the lemma.
\end{proof}

\begin{lemma}\label{lemmafuncd}
Let $u$ be a smooth solution of \eqref{bozk}. Then there exists $\gamma>0$ such that if $\|u(0)\|_{H^s}\leq \gamma$, then
\begin{equation}\label{des4}
\|J^{s}u\|_{L^{\infty}([0,1];L^{2}(\R^2))}\leq c\|u(0)\|_{H^s}.
\end{equation}
\end{lemma}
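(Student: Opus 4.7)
The plan is to run a standard continuity/bootstrap argument on the functional inequality provided by Lemma \ref{F}, and then invoke Lemma \ref{lemkatoponce} to upgrade the bound from index $\sigma$ to index $s$.

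First, I would observe that, because $u$ is a smooth solution, the quantity $F(T)=\|\nabla u\|_{L^1_T L^\infty}+\|J^\sigma u\|_{L^\infty_T L^2}$ is a continuous, nondecreasing function of $T\in[0,1]$, with
\[
F(0)=\|J^\sigma u(0)\|_{L^2}\leq \|u(0)\|_{H^s}\leq \gamma .
\]
Lemma \ref{F} provides the bound $F(T)\leq C\|u(0)\|_{H^s}(1+F(T))^3 e^{cF(T)}$. Setting $h(x):=(1+x)^3 e^{cx}$, I would fix a small threshold $M>0$ (any $M\leq 1$ works) and then choose $\gamma>0$ so small that
\[
2C\gamma\, h(M)\leq M .
\]

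Next comes the bootstrap. Let $A:=\{T\in[0,1]:F(T)\leq M\}$. By continuity $A$ is closed, and $0\in A$ since $F(0)\leq\gamma\leq M/2$. To see $A$ is open (relatively in $[0,1]$), note that for any $T_0\in A$ the estimate of Lemma \ref{F} gives
\[
F(T_0)\leq C\|u(0)\|_{H^s}\,h(F(T_0))\leq C\gamma\, h(M)\leq M/2,
\]
so that by continuity $F(T)<M$ in a neighborhood of $T_0$. Hence $A=[0,1]$, and in particular the improved bound $F(T)\leq 2C h(M)\|u(0)\|_{H^s}$ holds for every $T\in[0,1]$.

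Finally, this uniform bound gives
\[
\|u_x\|_{L^1([0,1];L^\infty)}\leq F(1)\leq C_0\|u(0)\|_{H^s}\leq C_0\gamma ,
\]
with $C_0:=2Ch(M)$. Applying Lemma \ref{lemkatoponce} with $T=1$ we obtain
\[
\|J^s u\|_{L^\infty([0,1];L^2)}\leq \|u(0)\|_{H^s}\exp\bigl(cC_0\|u(0)\|_{H^s}\bigr)\leq e^{cC_0\gamma}\|u(0)\|_{H^s},
\]
which is the claimed estimate \eqref{des4} (after absorbing $e^{cC_0\gamma}$ into the constant $c$). The only delicate point is the calibration of $M$ and $\gamma$ so that the bootstrap inequality $C\gamma h(M)\leq M/2$ closes; once this is arranged, everything else is mechanical.
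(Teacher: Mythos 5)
Your proof is correct, and it rests on exactly the same two pillars as the paper's: the self-improving inequality $F(T)\leq C\|u(0)\|_{H^s}(1+F(T))^{3}e^{cF(T)}$ of Lemma \ref{F}, and the energy estimate \eqref{comutado} of Lemma \ref{lemkatoponce} to convert the resulting control of $\|u_x\|_{L^1([0,1];L^\infty)}$ into the bound \eqref{des4}. Where you differ is in how the continuity argument is closed. The paper introduces $\Phi(y,\eta)=y-C\eta(1+y)^{3}e^{cy}$, applies the implicit function theorem at $(0,0)$ to produce a curve $A(\eta)$ of zeros with $A(0)=0$, and then argues by contradiction at $T_0=\inf\{T:F(T)>A(\Lambda)\}$, using the local monotonicity of $\Phi(\cdot,\eta)$ near $A(\eta)$; the a priori ceiling $\bar C=A(\Lambda)$ is thus defined implicitly and tracks the size of the data. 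You instead fix an explicit threshold $M$, calibrate $\gamma$ by $2C\gamma h(M)\leq M$ with $h(x)=(1+x)^3e^{cx}$, and run a direct open--closed (connectedness) bootstrap on $A=\{T\in[0,1]:F(T)\leq M\}$, exploiting the gap between $M$ and the improved bound $M/2$. Your route is more elementary — no implicit function theorem, all constants explicit — and it delivers the cleaner quantitative output $F(T)\leq Ch(M)\|u(0)\|_{H^s}$ linearly in the data, which is what makes the final constant $e^{cC_0\gamma}$ absorbable; the paper's route buys a slightly more structural statement (a data-dependent threshold $A(\Lambda)$) at the cost of more machinery. Two small points you implicitly use and should make explicit: $0\in A$ requires $\gamma\leq M$, which indeed follows from your calibration since $C>1$ and $h(M)\geq 1$; and the continuity and monotonicity of $F$ on $[0,1]$ presupposes, as does the paper's proof, that the smooth solution is defined on all of $[0,1]$ — this is how the lemma is meant to be read, with the extension to a uniform existence interval handled separately in Lemma \ref{existencia01}.
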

\begin{proof}
As in Lemma \ref{F}, we set $F(T)= \|\nabla u\|_{L^{1}_{T}L^{\infty}}+\|J^{\sigma}u\|_{L^{\infty}_{T}L^{2}}, \ T\in [0,1].$ Define
$$
\Phi(y,\eta)=y-C\eta(1+y)^3 \exp (cy),
$$
where  $C>1$ is the constant defined in Lemma \ref{F}. It is easy to check that
$\Phi (0,0)=0,$ and $\frac{\partial \Phi}{\partial y}(0,0)=1.$ Then, by the implicit function theorem, there exist $\delta >0$ and a smooth function  $A:[-\delta,\delta]\to\R$ such that $A(0)=0$ and $\Phi(A(\eta),\eta)=0,$ for all $\eta \in [-\delta, \delta]$. It is clear from the definition of $\Phi$ that $A(\eta)>0$, for all $\eta \in (0,\delta]$.
Moreover, since $\frac{\partial \Phi}{\partial y}(0,0)=1$ and  $\delta$ is small enough, we see that $\Phi(\cdot, \eta)$ is increasing near $A(\eta)$.

To simplify notation, set  $\Lambda=\|u(0)\|_{H^s}$. Take $0<\gamma\leq \delta$ and assume $\Lambda \leq \gamma$. Note that
$$
F(0)\leq \|u(0)\|_{H^{s}}\leq \gamma.
$$

\noindent {\bf Claim.} $F(T)\leq \bar{C}:=A(\Lambda)$, for any $T\in(0,1)$.

Indeed, assume by contradiction that
$$F(T)>\bar{C}, \ \mbox{for some}\  T\in (0,1).$$
Note that if $B:=\{ T\in (0,1): F(T)>\bar{C}\}$ and $T_0=\inf B$, then $T_0 >0$ and $F(T_0)=\bar{C}.$ Moreover,  there exists a decreasing sequence
$T_n \in B$ such that $T_n \to T_0$ and $F(T_n)>\bar{C}.$  From Lemma \ref{F} it follows that
\begin{equation}\label{Psi}
\Phi (F(T),\Lambda)=F(T)-C\Lambda (1+F(T))^{3}\exp(cF(T))\leq 0, \quad \ T\in [0,1].
\end{equation}
On the other hand, since $\Phi(\cdot, \eta)$ is increasing near $\bar{C},$ we deduce
\begin{equation}\label{phi}
\Phi (F(T_n), \Lambda)>\Phi (F(T_0),\Lambda)=\Phi(A(\Lambda),\Lambda)=0,
\end{equation}
 for  $n$  large enough.
Inequalities in \eqref{Psi} and \eqref{phi} lead to a contradiction and this establishes the proof of our claim.

The continuity of $F$ and the above claim imply that $F(1)\leq \bar{C}$. As a consequence,
\begin{equation}\label{estimativaintegral}
\int_{0}^{1}\|u_{x}(t)\|_{L^{\infty}}dt\leq \bar{C}.
\end{equation}
A combination of \eqref{estimativaintegral} and \eqref{comutado}  gives the desired conclusion.
 \end{proof}

\begin{lemma}\label{existencia01}
Let $\phi\in H^{s}(\R^2),$ $s>11/8,$ such that $\|\phi\|_{H^s}\leq \gamma,$ where $\gamma$ is as in Lemma \ref{lemmafuncd}. Then there exists a function $u\in C([0,1];H^{s}(\R^2))$ solving \eqref{bozk}.
    \end{lemma}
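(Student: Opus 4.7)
I would construct $u$ as a strong limit of a sequence of smooth solutions obtained from mollified data, using Theorem A to produce the smooth approximants and Lemma \ref{lemmafuncd} to control them uniformly. Fix a standard mollifier $\rho_n$ and set $\phi_n := \rho_n \ast \phi$, so that $\phi_n \in H^\infty(\R^2)$, $\phi_n \to \phi$ in $H^s$, and $\|\phi_n\|_{H^s} \le \|\phi\|_{H^s} \le \gamma$. For each $n$, Theorem A supplies a local smooth solution $u_n$. Applying Lemma \ref{lemmafuncd} to $u_n$ yields the uniform bound $\|J^{s} u_n\|_{L^\infty_T L^2} \le c\gamma$ and, via its proof (the quantity $F$ together with \eqref{estimativaintegral}), the uniform control $\|\partial_x u_n\|_{L^1_T L^\infty} \le \bar C$ on any existence sub-interval of $[0,1]$. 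Feeding this $L^1_T L^\infty$-bound into the Kato--Ponce estimate of Lemma \ref{lemkatoponce} at some regularity $s_0 > 2$ propagates the $H^{s_0}$-norm and, via the blow-up alternative inherent in Theorem A, lets me extend each $u_n$ to the entire interval $[0,1]$ keeping the same uniform bounds.

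The passage to the limit is a compactness argument. Applying the uniqueness estimate \eqref{unicidade} to the difference $u_n - u_m$, and using the uniform bound on $\|\partial_x u_n\|_{L^1_T L^\infty}$, I obtain
\[
\|u_n - u_m\|_{C([0,1]; L^2)} \le e^{c\bar C} \|\phi_n - \phi_m\|_{L^2} \longrightarrow 0,
\]
so $(u_n)$ is Cauchy in $C([0,1]; L^2(\R^2))$ with some limit $u$. Interpolating this convergence against the uniform $L^\infty_T H^s$ bound gives $u_n \to u$ in $C([0,1]; H^{s'})$ for every $s' < s$, together with $u \in L^\infty([0,1]; H^s)$. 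Choosing $s'>1$ (which is admissible because $s > 11/8$), the Sobolev embedding $H^{s'-1}\hookrightarrow L^\infty$ and the product rule force $u_n\partial_x u_n \to u\partial_x u$ strongly in $C([0,1]; H^{s'-1})$, so we may pass to the limit in \eqref{bozk} and conclude that $u$ solves the equation in the distributional sense.

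The main obstacle, as is typical in such low-regularity constructions, is upgrading continuity from $C([0,1]; H^{s'})$ for $s'<s$ to $C([0,1]; H^s)$. My plan is to follow a Bona--Smith-type strategy: the uniform bound in $L^\infty_T H^s$ combined with strong $L^2$ convergence first yields weak continuity $u \in C_w([0,1]; H^s)$, so it suffices to show that $t \mapsto \|u(t)\|_{H^s}$ is continuous. Applying the a priori estimate \eqref{comutado} to $u_n$ for short times, passing to the limit $n \to \infty$ by means of the strong $H^{s'}$-convergence already obtained, and invoking the time-reversibility of \eqref{bozk} under $(t,x,y) \mapsto (-t,-x,y)$, delivers both right- and left-continuity of the norm at every point of $[0,1]$. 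Together with weak continuity, this promotes $u$ into $C([0,1]; H^s(\R^2))$, completing the proof.
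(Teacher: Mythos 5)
Your overall construction is the paper's: mollify the data with $\rho_n$, take the smooth solutions $u_n$ from Theorem A, control them uniformly through Lemma \ref{lemmafuncd} and \eqref{estimativaintegral}, show $(u_n)$ is Cauchy in $C([0,1];L^2)$ via \eqref{unicidade}, and pass to the limit. Your device for reaching the whole interval $[0,1]$ (Kato--Ponce at a level $s_0>2$ fed with the uniform $\|\partial_x u_n\|_{L^1_TL^\infty}$ bound, plus the blow-up alternative) is a legitimate variant of the paper's, which instead derives $\|u_n(t)\|_{H^s}^2\leq\rho(t)$ with $\rho'=\rho^{3/2}$ (Hartman's comparison) to get a uniform $\tilde T$ and then rescales so that $\tilde T=1$. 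However, two steps of your proposal are genuinely problematic. First, the embedding claim is false in two dimensions: $H^{s'-1}(\R^2)\hookrightarrow L^\infty(\R^2)$ requires $s'-1>1$, i.e.\ $s'>2$, while here $s$ may be as close to $11/8$ as one likes, so no admissible $s'<s$ exists; moreover $H^{s'-1}$ is not an algebra at such low regularity, so ``$u_n\partial_x u_n\to u\partial_x u$ strongly in $C([0,1];H^{s'-1})$'' fails as stated. The repair is cheap and is what the paper does: write $uu_x=\tfrac12\partial_x(u^2)$; since $u_n\to u$ in $C_TL^2$ and $u_n$ is bounded in $C_TH^s$ with $s>1$ (hence in $C_TL^\infty$), one gets $u_n^2\to u^2$ in $C_TL^2$ and therefore $\partial_x(u_n^2)\to\partial_x(u^2)$ in the distributional sense, which is all that is needed.

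Second, your continuity upgrade has a circularity at interior times: to pass to the limit in \eqref{comutado} from a base time $\tau>0$ you need $\|u_n(\tau)\|_{H^s}\to\|u(\tau)\|_{H^s}$, and the strong $C_TH^{s'}$-convergence for $s'<s$ does not provide this --- it is precisely the convergence being proved. The argument closes only at $\tau=0$, where $\|\phi_n\|_{H^s}\to\|\phi\|_{H^s}$ by construction. To make your Bona--Smith scheme work at interior $\tau$ you must restart the approximation there, solving with mollified data $\rho_m\ast u(\tau)$ and invoking the uniqueness of Subsection 3.1 to identify the restarted solution with $u$ (and, for left-continuity, your time-reversal $(t,x)\mapsto(-t,-x)$, which does leave \eqref{bozk} invariant); you also need uniform smallness of $\int_\tau^t\|\partial_x u_n\|_{L^\infty}\,dr$ as $t\downarrow\tau$, which does follow from the bound $\|\partial_x u_n\|_{L^1_tL^\infty}\leq ct^{1-1/p}\|J^\sigma u_n\|_{L^p_tL^q}$ inside the proof of Lemma \ref{F}, but must be invoked explicitly. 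For perspective: the paper's own proof of this lemma stops at the distributional solution and defers the mild-solution and $C([0,1];H^s)$ statements to \cite{CunhaPastor}; the mechanism it actually deploys for $H^s$-continuity matters is the frequency-envelope pair, Lemmas \ref{lema6} and \ref{lemaomega}, which yield $\sup_n\sup_{[0,T]}\sum_{\lambda>\Lambda}\omega_\lambda^2\|\Delta_\lambda u_n(t)\|^2<\infty$ with $\omega_\lambda/\lambda^s\to\infty$, i.e.\ equi-smallness of high-frequency tails; this upgrades the $C_TL^2$-convergence of $u_n$ directly to $C_TH^s$ and gives $u\in C([0,1];H^s)$ with no time reversal and no restarting. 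Adopting that route would be both closer to the paper and simpler than repairing your limit passage.
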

 \begin{proof}
As is well-known,  \eqref{des4} allows to use a compactness argument. In fact, let
\begin{equation}\label{rhon}
\rho_{n}(x,y)=\frac{e^{\frac{-(x^2+y^2)}{4r_{n}}}}{4\pi r_{n}},
\end{equation}
where $\{r_n\}$ is a real sequence satisfying $r_{n} \to 0$, as $n\to \infty$. By defining $u_{0,n}=\rho_{n}\ast \phi$, it is clear that $u_{0,n}\in H^{\infty}(\R^2)$ and  $u_{0,n}\to \phi$ in $H^{s}(\R^2).$

Let $u_n$ be the sufficiently smooth solution of BO-ZK with initial data  $u_n(0)=u_{0,n}$ defined in $[0,T_n]$, provided by Theorem A.
We claim that we can extend $u_n$ to an interval $[0,\tilde{T}],$ where $\tilde{T}$ is independent of $n.$ In fact, let $\rho(t)$ be the  (maximal) solution of the IVP
\begin{eqnarray}
\left\{\begin{array} {lccc}
\frac{d}{dt}\rho(t)= \rho (t)^{3/2} \\
\rho(0)=\|\phi\|_{H^s}^2,
\end{array} \right.
\end{eqnarray}
defined on the interval $[0,\tilde{T}]$.
Since $u_n$ satisfies  the BO-ZK equation, using the Kato-Ponce commutator estimates to deal with the term $(u_n,u_n \partial_{x}u_n)$, we obtain, for any $t\in [0,T_n]$,
\[
\begin{split}
 \|u_n (t)\|_{H^s}^{2}&\leq \|u_{0,n}\|_{H^s}^2 +\int_{0}^{t}(\|u_{n}(t')\|_{H^s}^2)^{3/2}dt'\\
                            &\leq \|\phi\|_{H^s}^2 +\int_{0}^{t}(\|u_{n}(t')\|_{H^s}^2)^{3/2}dt',
\end{split}
\]
where we used that $\|u_{0,n}\|_{H^s}\leq \|\phi\|_{H^s}.$
The above inequality implies that (see e.g., \cite[page 29]{Ha})
\begin{equation}\label{ext}
\|u_n (t)\|_{H^s}^2\leq \rho(t), \quad t\in[0,T_n].
\end{equation}
Inequality \eqref{ext} allows to extend  $u_n$ to the interval $[0,\tilde{T}],$ so that
$$
\|u_n (t)\|_{H^s}^2\leq \rho(t), \quad t\in [0,\tilde{T}].
$$
By a change of variables we may assume $\tilde{T}=1.$
Since $u_n$ is smooth, inequalities \eqref{estimativaintegral},\eqref{unicidade}, and \eqref{des4}, still hold  with $u_n$ instead of $u$ and 1 instead of $T$.
Thus, in view of \eqref{des4},  there exists $u(t)\in H^s (\R^2)$, $t\in [0,1]$, such that
$$
u_n (t) \rightharpoonup u(t) \ \mbox{em} \ H^{s}(\R^2).
$$
Consequently, for any $t\in [0,1],$
\begin{eqnarray}\nonumber
\|u(t)\|^{2}_{H^s}&=& \lim_{n\to \infty}(u_n(t),u(t))_{H^s}\\\nonumber
                  &\leq & \limsup_{n\to \infty}\|u_n (t)\|_{H^s}\|u(t)\|_{H^s}\\\nonumber
                  &\leq & \rho(t)^{1/2}\|u(t)\|_{H^s}.
\end{eqnarray}
This implies that $u\in L^{\infty}([0,1];H^{s}(\R^2)).$ Because $\|u_{0,n}\|_{H^s}\leq \|\phi\|_{H^s}\leq \gamma,$ we see that $\|\partial_{x} u_n\|_{L^{1}_{T}L^{\infty}}<C.$ Therefore, by \eqref{unicidade}, we obtain
$$\|u_{n}(t)-u_{m}(t)\|\leq \|u_{0,n}-u_{0,m}\|e^{2C}, \quad t\in [0,1].$$
which means that $(u_n)$ is a Cauchy sequence in $L^{\infty}([0,1];L^{2}(\R^2))$ and $\partial_x u_{n}^2$ converges to $\partial_{x}u^2$ in the distributional sense.
Therefore $u$ satisfies \eqref{bozk} in the distributional sense. Finally, using a more or less standard argument, we can prove that indeed  $u$ is a mild solution of \eqref{bozk} and  $u\in C([0,1]; H^{s}(\R^2)).$ The interested reader will find the detail in \cite{CunhaPastor}.
\end{proof}

The existence part in Theorem \ref{baixaregularidade} in now a consequence of Lemmas \ref{dadopequeno} and \ref{existencia01}.

\subsection{\textbf{Continuous Dependence}}

In this section we finish Theorem \ref{baixaregularidade} by proving the continuous dependence. Let $\{\phi^n\}$ be a sequence in $H^s(\R^2)$ such that $\phi^n\to\phi$, in $H^s(\R^2)$.
Let $u,u^n \in C([0,T];H^{s}(\R^2))$ be the  solutions of \eqref{bozk}, provided in Subsection \ref{exissub}  with $u^n(0)=\phi^{n}$ and
$u(0)=\phi.$ It is clear that there exists $K>0$ such that $\|u^n(0)\|_{H^s}\leq K $ and $ \|u(0)\|_{H^s}\leq K.$ So, by the  previous arguments there exists $K_1>0$ such that
 $$\|u^{n}_x\|_{L^{1}_{T}L^{\infty}}\leq K_1  \ \mbox{and}  \ \|u_x\|_{L^{1}_{T}L^{\infty}}\leq K_1.$$
Therefore, as in \eqref{unicidade}, for any $t\in [0,T]$,
\begin{equation}\label{limitemeio}
\begin{split}
\|u^n (t)-u(t)\|&\leq  \|u^n (0)-u(0)\|\exp (c (\|u^{n}_x\|_{L^{1}_{T}L^{\infty}}+\|u_x\|_{L^{1}_{T}L^{\infty}}))\\
                &\leq  \|u^n(0)-u(0)\|e^{2cK_1}.
\end{split}
\end{equation}
Since the right-hand side of \eqref{limitemeio} goes to zero, as $n\to\infty$, we infer that
\begin{equation}\label{convinL2}
u^n \to u \ \mbox{in} \ C([0,T];L^{2}(\R^2).
\end{equation}
 Lemmas \ref{lema6} (with $\tau=0$) and  \ref{lemaomega}, yield,  for any $t\in [0,T]$,
\begin{eqnarray*}
\sum_{\lambda}\omega_{\lambda}^2 \|u_{\lambda}^n (t)\|&\leq & c\sum_{\lambda}\omega_{\lambda}^2 \|u_{\lambda}^n (0)\|^2\\
&\leq & c\sup_{n}\sum_{\lambda}\omega_{\lambda}^{2}\|u_{\lambda}^{n}(0)\|^2 <\infty.
\end{eqnarray*}
This last inequality promptly implies that
   \begin{equation}\label{desig12}
   \sup_{n}\sup_{[0,T]}\sum_{\lambda} \omega_{\lambda}^2 \|u_{\lambda}^{n}(t)\|^{2}<\infty.
 \end{equation}
 Fatou's Lemma and another application of Lemma \ref{lemaomega} give
\begin{equation*}
\begin{split}
\sum_{\lambda} \omega_{\lambda}^{2}\|u_{\lambda}(0)\|^{2}&=\sum_{\lambda}\liminf_{n\to \infty} \omega_{\lambda}^{2}\|u_{\lambda}^{n}(0)\|^{2}\leq \liminf_{n\to \infty}\sum_{\lambda}\omega_{\lambda}^{2}\|u_{\lambda}^{n}(0)\|^{2}\\
&\leq\sup_{n}\sum_{\lambda}\omega_{\lambda}^{2}\|u_{\lambda}^{n}(0)\|^{2}<\infty.
\end{split}
\end{equation*}
Also, another application of Lemma \ref{lema6} (with $\tau=0$ and $t\in[0,T]$)
\begin{equation}\label{desig13}
\sum_{\lambda}\omega_{\lambda}^{2}\|u_{\lambda}(t)\|^2 \leq e^{C}\sum_{\lambda}\omega_{\lambda}^{2}\|u_{\lambda}(0)\|^{2}<\infty.
\end{equation}
From \eqref{desig12} and \eqref{desig13}, it follows that
\begin{equation}\label{desig14}
\sup_{n}\sup_{[0,T]}\sum_{\lambda}\omega_{\lambda}^2 (\|u_{\lambda}^{n}(t)\|^{2}+\|u_{\lambda}(t)\|^{2})<\infty.
\end{equation}

Now, for a fixed dyadic number $\Lambda$, let us define $u_{\Lambda}:=\sum_{\lambda \leq \Lambda}u_{\lambda}$. Observe that
\begin{equation}\label{desig15}
\|u^{n}-u\|_{L^{\infty}_{T}H^s}\leq \|u^{n}-u^{n}_{\Lambda}\|_{L^{\infty}_{T}H^s}+\|u^{n}_{\Lambda}-u_{\Lambda}\|_{L^{\infty}_{T}H^s}+\|u_{\Lambda}-u\|_{L^{\infty}_{T}H^s}.
\end{equation}
Thus, it suffices to show that we can choose $\Lambda$ such that each one of the terms in the right-hand side of \eqref{desig15} goes to zero, as $n\to\infty$.
Since $\mathrm{supp} \ \widehat{u_{\lambda}}\subset \{(\xi,\eta)\in \R^2: \frac{\lambda}{2}\leq |(\xi,\eta)|\leq 2\lambda\},$ we have that if $\lambda=2^{l}$ and $\mu=2^{k}$ are such that, $l,k\geq 0,$ $|l-k|\geq 2$, then $(u_{\lambda}(t), u_{\mu}(t))_{H^s}=0.$  Thus,
\begin{equation}\label{desig16}
\begin{split}
\|u_{\Lambda}(t)-u(t)\|^{2}_{H^s}&=\|\sum_{\lambda > \Lambda}u_{\lambda}(t)\|^{2}_{H^s}\\
&\leq  3\sum_{\lambda>\Lambda}\|u_{\lambda}(t)\|^{2}_{H^s}\\
&\leq 3\sup_{[0,T]}\sum_{\lambda > \Lambda}\omega_{\lambda}^{2}\|u_{\lambda}(t)\|^{2}_{H^s}.
\end{split}
\end{equation}
Since the right-hand side of \eqref{desig16} goes to zero as $\Lambda \to \infty$, we see that  given $\epsilon >0$ there exists $\Lambda_1 >0$ such that, for any $t\in [0,T]$,
\begin{equation}
\label{desig17}
\Lambda \geq \Lambda_1 \Rightarrow \|u_{\Lambda_1}(t)-u(t)\|_{H^s}<\frac{\epsilon}{4}.
\end{equation}
Note that
\[
\begin{split}
 \sup_{n}\sup_{[0,T]}\|u_{\Lambda}^{n}(t)-u^{n}(t)\|^{2}_{H^s}&=\sup_{n}\sup_{[0,T]}\|\sum_{\lambda>\Lambda}u^{n}_{\lambda}(t)\|_{H^s}^{2}\\
                                                             &\leq  \sup_{n}\sup_{[0,T]}\sum_{\lambda>\Lambda}\|u_{\lambda}^{n}(t)\|_{H^s}^2\\
                                                            &\leq \sup_{n}\sup_{[0,T]}\sum_{\lambda>\Lambda}\omega_{\lambda}^{2}\|u_{\lambda}^{n}(t)\|^{2}_{H^s}.
\end{split}
\]
As above, there exists $\Lambda_2 >0$ such that, for any $t\in [0,T]$,
\begin{equation}\label{desig18}
\Lambda \geq \Lambda_2 \Rightarrow \sup_{n}\|u^{n}_{\Lambda}(t)-u^{n}(t)\|_{H^s}<\frac{\epsilon}{4}, \quad t\in [0,T].
\end{equation}

Let $\Lambda_3=\max \{\Lambda_1, \Lambda_2\}$ and
observe that $\mathrm{supp} ((u^{n}_{\Lambda_3}-u_{\Lambda_3})(t))^{\wedge}\subset B(0,2\Lambda_3)$. Therefore,
\[
\begin{split}
\|u_{\Lambda_3}^{n}(t)-u_{\Lambda_3}(t)\|^{2}_{H^s}&=\int_{\R^2}(1+\xi^2+\eta^2)^{s}|(u^{n}_{\Lambda_3}(t)-u_{\Lambda_3}(t))^{\wedge}|^{2}d\xi d\eta\\\nonumber
&\leq  (2\Lambda_3)^s \|u^{n}_{\Lambda_3}(t)-u_{\Lambda_3}(t)\|^{2}\\\nonumber
&\leq  (2\Lambda_3)^s \|\sum_{\lambda\leq \Lambda_3}\varphi (\frac{\xi}{\lambda},\frac{\eta}{\lambda})(\widehat{u^{n}}(t)-\widehat{u}(t))\|^2\\\nonumber
&\leq  (2\Lambda_3)^s \|u^n -u\|^{2}_{L^{\infty}_{T}L^2}.
\end{split}
\]
In view of \eqref{convinL2}, there exists $n_0>0$ such that
\begin{equation}\label{desig19}
n>n_0 \Rightarrow \|u^n-u\|_{L^{\infty}_{T}L^2}^2<\frac{\epsilon^2}{4(2\Lambda_3)^{s}}.
\end{equation}
Finally, from \eqref{desig17}--\eqref{desig19}, if $n>n_0$, we obtain
\begin{eqnarray*}
\|u^{n}-u\|_{L^{\infty}_{T}H^s}&\leq& \|u^n - u^{n}_{\Lambda_3}\|_{L^{\infty}_{T}H^s}+\|u^{n}_{\Lambda_3}-u_{\Lambda_3}\|_{L^{\infty}_{T}H^s}+\|u_{\Lambda_3}-u\|_{L^{\infty}_{T}H^{s}}\\
&<&\frac{\epsilon}{4}+\frac{\epsilon}{2}+\frac{\epsilon}{4}=\epsilon.
\end{eqnarray*}
This shows the continuity of the map
$$\phi \in B(0,K)\subset H^{s}(\R^2)\mapsto u\in C([0,T];H^{s}(\R^2)).$$
The proof of Theorem \ref{baixaregularidade} is finally completed.

\section{Proof of Theorem \ref{melhoradoB1}}\label{proofofth2}

In this section, we will prove Theorem \ref{melhoradoB1}.
Before we start with the proof itself, let us introduce the needed tools.
Given $N\in \Z^{+}$, we define the real function $\beta_N$ by letting
\begin{eqnarray}
\beta_{N}(x)=\left\{\begin{array} {lccc}
\langle x \rangle \ \mathrm{if} \  |x|\leq N,\\
2N \ \mathrm{if} \ |x|\geq 3N,
\end{array} \right.
\end{eqnarray}
where $\langle x \rangle = (1+x^2)^{1/2}$. Also, we assume that $\beta_{N}$
is smooth and non-decreasing in $|x|$ with $\beta_{N}'(x)\leq 1,$ for any
$x\geq 0$, and there exists a constant $c$ independent of $N$ such that
$|\beta_{N}''(x)|\leq c \partial_{x}^{2}\langle x \rangle.$ Now, we introduce the truncated weights by setting $r=(x^2 +y^2)^{1/2}$ and defining
\begin{equation}\label{defWN}
w_{N}(x,y)=\beta_{N}(r).
\end{equation}

The next two lemmas are the key ingredients in order to establish Theorem \ref{melhoradoB1}.

\begin{lemma}\label{inter}
Let $a,b>0.$ Assume that $J^{a}f\in L^{2}(\R^2)$ and
$\langle x,y \rangle^b f=(1+x^2 +y^2)^{b/2}f\in L^{2}(\R^2).$ Then for any
$\alpha \in (0,1)$
\begin{equation}\label{inter1}
\|J^{\alpha a}(\langle x, y \rangle^{(1-\alpha)b}f)\|\leq c\|\langle x, y
\rangle^{b}f\|^{1-\alpha}\|J^{a}f\|^{\alpha}.
\end{equation}
Moreover, inequality \eqref{inter1}  still holds with $w_{N}(x,y)$
instead of $\langle x, y \rangle$.  The constant $c$ is independent of $N.$
\end{lemma}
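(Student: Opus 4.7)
The natural strategy is Stein's three-lines interpolation theorem applied, on the closed strip $S=\{z\in\mathbb{C}\,:\,0\le\mathrm{Re}(z)\le 1\}$ and for $f$ in the Schwartz class, to the analytic family of operators
\begin{equation*}
T_z f := J^{za}\bigl(\langle x,y\rangle^{(1-z)b} f\bigr).
\end{equation*}
The target inequality is precisely the $L^2\to L^2$ bound for $T_\alpha$, interpolated from the two boundary estimates.

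On the imaginary axis $\mathrm{Re}(z)=0$, writing $z=i\tau$, the Fourier multiplier $J^{i\tau a}$ is an $L^2$-isometry by Plancherel (its symbol has modulus one), and $|\langle x,y\rangle^{(1-i\tau)b}|=\langle x,y\rangle^b$, so one immediately obtains $\|T_{i\tau}f\|=\|\langle x,y\rangle^b f\|$, uniformly in $\tau$. On the line $\mathrm{Re}(z)=1$, writing $z=1+i\tau$ and stripping the purely imaginary derivative factor via Plancherel gives
\begin{equation*}
\|T_{1+i\tau}f\| = \|J^a(\phi_\tau f)\|,\qquad \phi_\tau(x,y):=\langle x,y\rangle^{-i\tau b}.
\end{equation*}
The function $\phi_\tau$ is smooth with $\|\phi_\tau\|_{L^\infty}=1$ and derivatives satisfying $|\partial^\gamma\phi_\tau|\le C_\gamma(1+|\tau|)^{|\gamma|}$. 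Splitting $J^a(\phi_\tau f)=\phi_\tau J^a f+[J^a,\phi_\tau]f$ and applying a Kato-Ponce (or Kenig-Ponce-Vega) commutator estimate to the second term yields
\begin{equation*}
\|T_{1+i\tau}f\|\le c(1+|\tau|)^{N_0}\|J^a f\|
\end{equation*}
for some $N_0$ depending on $a$. Such polynomial growth is admissible in the three-lines theorem; evaluating the interpolated inequality at $z=\alpha\in(0,1)$ gives exactly \eqref{inter1}.

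For the statement with the truncated weight, one repeats the argument with $\langle x,y\rangle$ replaced by $w_N$. Since $w_N>0$ and smooth with $|w_N^{i\tau}|=1$, and since the hypotheses on $\beta_N$ (namely $\beta_N'\le 1$ and $|\beta_N''|\le c\,\partial_x^2\langle x\rangle$) transport to uniform-in-$N$ bounds on the derivatives of $w_N(x,y)=\beta_N((x^2+y^2)^{1/2})$, every constant in the two boundary estimates remains independent of $N$, producing the same conclusion with an $N$-independent $c$.

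The main obstacle is the boundary estimate on $\mathrm{Re}(z)=1$: one must show that applying $J^a$ to the oscillatory multiplier $\phi_\tau$ times $f$ costs only a polynomial factor in $|\tau|$, which is the genuinely technical ingredient; the rest of the scheme is essentially formal bookkeeping in Stein's interpolation framework.
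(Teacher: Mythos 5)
Your proposal is correct and coincides with the paper's own route: the paper defers to Lemma 1 of Fonseca--Ponce (and Lemma 4 of Nahas--Ponce), whose proof is precisely this Stein--Hirschman three-lines argument for the family $T_z f=J^{za}\bigl(\langle x,y\rangle^{(1-z)b}f\bigr)$, with Plancherel on $\mathrm{Re}\,z=0$ and a multiplier/commutator bound with polynomial growth in $\tau$ on $\mathrm{Re}\,z=1$. The observation that $\beta_N'\leq 1$, $|\beta_N''|\leq c\,\partial_x^2\langle x\rangle$ yield $N$-uniform bounds for the $w_N$ case is likewise the intended argument.
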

\begin{proof}
The proof  is similar to that of Lemma 1 in \cite{GermanPonce}. See also Lemma 4 in \cite{NP} and its consequences.
\end{proof}

\begin{lemma}\label{Comu}
For any $p\in (1,\infty)$ and $l,m\in \Z^{+}\cup \{0\},$ with $l+m\geq 1,$ there exists a constant $c>0$, depending only on $p,l,$ and $m$ such that
\begin{equation}\label{cald}
\|\partial_{x}^{l}[\mathcal{H};h]\partial_{x}^{m}f\|_{L^p_x}\leq c \|\partial_{x}^{l+m}h\|_{L^\infty_x}\|f\|_{L^p_x}.
\end{equation}
\end{lemma}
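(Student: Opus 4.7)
The plan is to reduce the mixed-derivative commutator $\partial_x^l[\mathcal{H},h]\partial_x^m$ to commutators of the form $[\mathcal{H},g]\partial_x^k$ by distributing the outer derivatives, and then to invoke the higher-order Calder\'on commutator estimate together with the $L^p$-boundedness of $\mathcal{H}$.

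First, by induction on $l$, starting from $\partial_x[\mathcal{H},h]=[\mathcal{H},h]\partial_x+[\mathcal{H},\partial_x h]$ and using Pascal's rule, I would establish the operator identity
$$
\partial_x^l [\mathcal{H},h]=\sum_{j=0}^{l}\binom{l}{j}\,[\mathcal{H},\partial_x^j h]\,\partial_x^{l-j}.
$$
Applying both sides to $\partial_x^m f$ yields
$$
\partial_x^l[\mathcal{H},h]\partial_x^m f=\sum_{j=0}^{l}\binom{l}{j}\,[\mathcal{H},\partial_x^j h]\,\partial_x^{l+m-j}f,
$$
so it suffices to control each summand by $c\|\partial_x^{l+m}h\|_{L^\infty_x}\|f\|_{L^p_x}$. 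Setting $k:=l+m-j$ and $g:=\partial_x^j h$ (so that $g^{(k)}=\partial_x^{l+m}h$), two cases arise. If $k=0$, which forces $m=0$ and $j=l$, the summand $[\mathcal{H},\partial_x^l h]f$ is bounded in $L^p_x$ by a constant times $\|\partial_x^l h\|_{L^\infty_x}\|f\|_{L^p_x}$ via H\"older's inequality and the $L^p$-boundedness of $\mathcal{H}$. If $k\geq 1$, what is required is the higher-order Calder\'on commutator estimate
$$
\|[\mathcal{H},g]\partial_x^k f\|_{L^p_x}\leq c\,\|g^{(k)}\|_{L^\infty_x}\|f\|_{L^p_x},\qquad 1<p<\infty.
$$

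This last inequality is the analytic heart of the lemma, and is where the main difficulty lies. The case $k=1$ is Calder\'on's classical first commutator theorem. For general $k\geq 1$, one integrates by parts $k$ times in the kernel representation
$$
[\mathcal{H},g]\partial_x^k f(x)=\frac{(-1)^k}{\pi}\,\mathrm{p.v.}\!\int_{\R} f(y)\,\partial_y^k\!\left(\frac{g(x)-g(y)}{x-y}\right)dy,
$$
expands $g(x)$ about $y$ using Taylor's formula of order $k$ with integral remainder $R_k(x,y)=\frac{1}{(k-1)!}\int_y^x (x-t)^{k-1}g^{(k)}(t)\,dt$, and uses the Leibniz rule to see that all polynomial contributions cancel. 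What survives is $(-1)^{k+1}\mathcal{H}(g^{(k)}f)$, which is bounded in $L^p_x$ by $c\|g^{(k)}\|_{L^\infty_x}\|f\|_{L^p_x}$, together with a higher-order Calder\'on commutator whose kernel, after rewriting $R_k(x,y)/(x-y)^{k+1}$ as the convex average $\frac{1}{(k-1)!(x-y)}\int_0^1(1-s)^{k-1}g^{(k)}(y+s(x-y))\,ds$, becomes an integral mean of first-order Calder\'on commutators, each of which is bounded in $L^p_x$ with norm $\lesssim\|g^{(k)}\|_{L^\infty_x}$ by Calder\'on's classical first-commutator theorem.

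I would therefore carry out the identity of Step~1 and the case analysis above in detail, and invoke the higher-order Calder\'on commutator estimate as a known result (see, e.g., \cite{Po} and the references therein). The main obstacle is this last estimate, whose standard proof is classical but nontrivial.
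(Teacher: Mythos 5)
Your algebraic reduction is sound: since $\mathcal{H}$ commutes with $\partial_x$, the identity $\partial_x^l[\mathcal{H},h]=\sum_{j=0}^{l}\binom{l}{j}[\mathcal{H},\partial_x^j h]\partial_x^{l-j}$ holds, the $k=0$ case follows from H\"older and $L^p$-boundedness of $\mathcal{H}$, and the lemma is thereby reduced to $\|[\mathcal{H},g]\partial_x^k f\|_{L^p_x}\leq c\|g^{(k)}\|_{L^\infty_x}\|f\|_{L^p_x}$ for $k\geq 1$. For comparison: the paper offers no proof at all --- it cites the statement verbatim as Lemma 3.1 of \cite{Dawson} --- so your fallback of invoking the higher-order commutator estimate as known is consistent with the paper's own treatment, though the apt reference is \cite{Dawson} (going back to Calder\'on and Coifman--Meyer) rather than \cite{Po}.

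The genuine gap is in your sketch of that key estimate. Having written $R_k(x,y)/(x-y)^{k+1}$ as the convex average $\frac{1}{(k-1)!(x-y)}\int_0^1(1-s)^{k-1}g^{(k)}(y+s(x-y))\,ds$, you claim each fixed-$s$ slice is a first-order Calder\'on commutator bounded by Calder\'on's classical theorem. It is not: for fixed $s\in(0,1)$, the substitution $y=x-t$ turns $f\mapsto \mathrm{p.v.}\int\frac{g^{(k)}(y+s(x-y))}{x-y}f(y)\,dy$ into the bilinear Hilbert transform $\mathrm{p.v.}\int g^{(k)}(x-(1-s)t)\,f(x-t)\,\frac{dt}{t}$, whose $L^\infty\times L^p\to L^p$ boundedness is the Lacey--Thiele theorem, with uniformity in $s$ only by the later Grafakos--Li work. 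Calder\'on's first-commutator theorem bounds precisely the \emph{average} over $s\in[0,1]$ with weight $1$, not the individual slices --- the slices were Calder\'on's open problem for three decades, so your argument is circular (indeed far stronger than what you are quoting). At $k=1$ this is harmless, since the unweighted average is itself the first commutator $\frac{g(x)-g(y)}{(x-y)^2}$ and no slicing is needed; but for $k\geq 2$ the weight $(1-s)^{k-1}$ means the averaged kernel is not literally a Calder\'on commutator either, and an additional argument is required --- e.g., treating the Taylor-remainder kernel directly via Coifman--Meyer multilinear multiplier theory, or following the proof of Lemma 3.1 in \cite{Dawson}. As written, then, either replace the slice argument by such a treatment, or drop the sketch and cite \cite{Dawson}, as the paper does; the Leibniz reduction you supply is correct but the analytic heart of your proposal does not stand.
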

\begin{proof}
See Lemma 3.1 in \cite{Dawson} for the details.
\end{proof}

\begin{proof}[Proof of Theorem \ref{melhoradoB1}] If we assume that $\phi\in\mathcal{Z}_{s,r}$, then  from Theorem \ref{baixaregularidade} we already know that the solution of \eqref{bozk} exists and is unique in $H^s(\R^2)$.
Thus, we need to handle with the persistence property in $L^2_r$. Moreover, once we obtain
the persistence property in $L^2_r$, the continuity of $u:[0,T]\to L^2_r$ and
the continuity of the map data-solution follow as in  \cite{CunhaPastor}. So, we shall give only the main steps.

Part i). Assume  $s>11/8$ and let $r=\theta \in [0,11/16]$. Let  $u\in C([0,T];H^{s})$ be the solution of  \eqref{bozk} with initial data $\phi$. Define $\phi_n=\rho_{n}\ast \phi,$ where $\rho_n$ is given by \eqref{rhon}, and let $v:=u_n\in C([0,T];H^{s})$ be the solution of \eqref{bozk}  with initial data $\phi_n$. By \eqref{des4}, there exists a positive constant $M_1$, independent of $n$, such that
$$
\sup_{[0,T]}\|v\|_{H^s}\leq M_{1},
$$
Let $w_N$ be as in  \eqref{defWN}. Multiplying the differential equation
\eqref{bozk} by $w_{N}^{2\theta}v$ and integrating on $\R^{2}$, we obtain
\begin{equation}\label{106}
\frac{1}{2}\frac{d}{dt}\|w_{N}^{\theta}v\|^{2}+
(w_{N}^{\theta}v,w_{N}^{\theta}\mathcal{H}\partial_{x}^{2}v+
w_{N}^{\theta}v_{xyy}+w_{N}^{\theta}vv_{x})=0.
\end{equation}
Observe we may write
\begin{equation*}
\begin{split}
w_{N}^{\theta}\mathcal{H}\partial_{x}^{2}v=&\ [w_{N}^{\theta};\mathcal{H}]\partial_{x}^{2}v+\mathcal{H}(w_{N}^{\theta}\partial_{x}^{2}v)\\
=&\ A_1+\mathcal{H}\partial_{x}^{2}(w_{N}^{\theta}v)-
2\mathcal{H}(\partial_{x}w_{N}^{\theta}\partial_{x}v)-\mathcal{H}\partial_{x}^{2}w_{N}^{\theta}v\\
=&\ A_1 +A_2 +A_3 +A_4.
\end{split}
\end{equation*}
Let us estimate the terms $A_i$. First we note that from Lemma \ref{Comu},
\begin{equation}\label{A1estimate}
\begin{split}
\|A_1\|&=\|\|[w_{N}^{\theta};\mathcal{H}]\partial_{x}^{2}v\|_{L^{2}_{x}}\|_{L^{2}_{y}}\\
&\leq
c\|\|\partial_{x}^{2}w_{N}^{\theta}\|_{L^{\infty}_{x}}\|v\|_{L^{2}_{x}}\|_{L^{2}_{y}}\\& \leq
\;c\|\partial_{x}^{2}w_{N}^{\theta}\|_{L^{\infty}_{xy}}\|v\| \leq  \;
cM_{1}.
\end{split}
\end{equation}
Also, using that $\mathcal{H}$ is bounded in $L^2(\R)$, we deduce
\begin{equation}\label{A3estimate}
\|A_3\|= 2\|\partial_{x}w_{N}^{\theta}\partial_{x}v\|\leq c \|v\|_{H^{1}}\leq cM_{1},
\end{equation}
and
\begin{equation}\label{A4estimate}
\|A_4\|\leq cM_{1}.
\end{equation}
Moreover, inserting $A_2$ into \eqref{106} we see that its contribution is null.
The constant $c$ that appears here and in the rest of the proof will always be independent of $N$. From \eqref{106}-\eqref{A4estimate} it follows  that
\begin{equation}\label{106.1}
\frac{1}{2}\frac{d}{dt}\|w_{N}^{\theta}v\|^{2}+
(w_{N}^{\theta}v, w_{N}^{\theta}v_{xyy}+w_{N}^{\theta}vv_{x})\leq cM_1\|w_{N}^{\theta}v\|.
\end{equation}

To estimate the term with the third order derivative in \eqref{106.1}, we will divide the proof into two cases.

\noindent {\bf Case 1).} $\theta\in (1/2,11/16].$

 Using Lemma \ref{inter}, with $a=2\theta, \ \alpha=\frac{1}{2\theta}$, and \ $b=\theta$, in conjunction with Young and H\"older's inequalities, we have
\begin{equation}\label{teoZa}
\|J^1(w_{N}^{\theta-1/2}v)\|\leq c(\|w_{N}^{\theta}v\|+\|J^{2\theta}v\|+M_{1}).
\end{equation}
By using integration by parts, inequalities
$|\partial_{x}w_{N}^{2\theta}|\leq cw_{N}^{2\theta-1}$, $|\partial_{y}w_{N}^{2\theta}|\leq cw_{N}^{2\theta-1}$, $|\partial_{y}^2 w_{N}^{2\theta}|\leq cw_{N}^{2\theta-1},$
  \eqref{teoZa},  and Young's inequality, we obtain

\begin{equation}\label{teoZc}
\begin{split}
\int_{\R^2} w_{N}^{2\theta}v\partial_{x}\partial_{y}^{2}v=&\ \frac{1}{2}\int_{\R^2}(-2\partial_{y}w_{N}^{2\theta}v\partial_{x}\partial_{y}v+
\partial_{x}w_{N}^{2\theta}(\partial_{y}v)^2 )\\
=&\int_{\R^2} \partial_{y}^{2}w_{N}^{2\theta}v\partial_{x}v +\int_{\R^2} \partial_{y}w_{N}^{2\theta}\partial_{y}v \partial_{x}v +\int_{\R^2} \partial_{x}w_{N}^{2\theta}(\partial_{y}v)^2\\
\leq & \|w_{N}^{\theta-1/2}v\|\|w_{N}^{\theta-1/2}\partial_{x}v\|+\|w_{N}^{\theta-1/2}\partial_{y}v\|\|w_{N}^{\theta-1/2}\partial_{x}v\|+\|w_{N}^{\theta-1/2}\partial_{y}v\|^2\\
\leq & \ c \|J^1(w_{N}^{\theta-1/2}v)\|^2\\
\leq & \ c(\|w_{N}^{\theta}v\|^2+\|J^{2\theta}v\|^2+M_{1}^{2})\\
\leq &\ 
c(\|w_{N}^{\theta}v\|^2+M_{1}^{2}),
\end{split}
\end{equation}
where in the last inequality we used that $\theta\leq11/16$ and $s>11/8$.

\noindent {\bf Case 2).} $\theta\in (0,1/2].$

As in the last case, using integration by parts, we see that
 \begin{equation}\label{teoZd}
\begin{split}
\int_{\R^2} w_{N}^{2\theta}v\partial_{x}\partial_{y}^{2}v
=&\int_{\R^2} \partial_{y}^{2}w_{N}^{2\theta}v\partial_{x}v +\int_{\R^2} \partial_{y}w_{N}^{2\theta}\partial_{y}v \partial_{x}v +\int_{\R^2} \partial_{x}w_{N}^{2\theta}(\partial_{y}v)^2\\
\leq& \
\|w_{N}^{\theta-1/2}v\|\|w_{N}^{\theta-1/2}\partial_{x}v\|+\|w_{N}^{\theta-1/2}\partial_{y}v\|\|w_{N}^{\theta-1/2}\partial_{x}v\|+\|w_{N}^{\theta-1/2}\partial_{y}v\|^2\\
\leq & \ \|v\|\|\partial_{x}v\|+\|\partial_{y}v\|\|\partial_{x}v\|+\|\partial_{y}v\|^2 \\
\leq & \ cM_{1}^{2}.
\end{split}
\end{equation}

 Finally, by noting that
 \begin{equation}\label{teoZd1}
 |(w_{N}^{\theta}v,w_{N}^{\theta}vv_{x})|\leq \|v_x\|_{L^{\infty}_{xy}}\|w_{N}^{\theta}v\|^2,
 \end{equation}
H\"older's inequality, \eqref{106.1}, and the above inequalities, imply that, for all $\theta \in [0,11/16]$,
 $$
 \frac{d}{dt}\|w_{N}^{\theta}v\|^{2}\leq c\Big(M_{1}^2+\big(1+\|v_x\|_{L^{\infty}_{xy}}\big)\|w_{N}^{\theta}v\|^{2}\Big).
 $$
 By Gronwall's Lemma, we then obtain
 \[
 \begin{split}
 \|w_{N}^{\theta}v\|^{2}&\leq \|w_{N}^{\theta}\phi_{n}\|^{2}+tcM_{1}^{2}\\
 &\quad +c\int_{0}^{t}\exp\left \{\int_{0}^{t'}(1+\|v_{x}(\tau)\|_{L^{\infty}_{xy}})d\tau \right\}\Big(\|w_{N}^{\theta}\phi_{n}\|^{2}+t'cM_{1}^{2}\Big)dt'.
 \end{split}
\]
Hence, from \eqref{estimativaintegral},
 $$
 \|w_{N}^{\theta}v\|^{2}\leq \|w_{N}^{\theta}\phi_n\|^{2}+tcM_{1}^2+c\int_{0}^{t}e^{Ct'}\Big(\|w_{N}^{\theta}\phi_n\|^{2}+t'cM_{1}^{2}\Big)dt'.
 $$
By letting  $N\to \infty$ in the last inequality, the continuous dependence in $L^{2},$ yields
  $$\|w_{N}^{\theta}u\|^{2}\leq \|w_{N}^{\theta}\phi\|^{2}+tcM_{1}^2+c\int_{0}^{t}e^{Ct'}\Big(\|w_{N}^{\theta}\phi\|^{2}+t'cM_{1}^{2}\Big)dt'.$$
  The Monotone Convergence Theorem now gives
  \begin{equation}\label{107}
  \|\langle x,y \rangle^{\theta}u\|^{2}\leq \|\langle x,y
  \rangle^{\theta}\phi\|^{2}+h(t),
  \end{equation}
  where $h$ is a real function such that $h(t)\to 0,$  as  $t \downarrow 0.$ Inequality \eqref{107} establishes the persistence property in $L^2_r$. As we already said  the rest of the proof now runs as in \cite[Theorem 1.4]{CunhaPastor}.

The proof of (ii) follows similar arguments as those in part (i). So we omit the details. This finishes the proof of Theorem \ref{melhoradoB1}.
\end{proof}

\section*{Acknowledgements}
A.P. is partially supported by CNPq grant 303374/2013-6 and FAPESP grant 2013/08050-7.

\end{document}